\documentclass[11pt]{amsart}
\usepackage{amscd,amsfonts,amsmath,amssymb,amstext,amsthm}
\usepackage[all]{xy}

\newcommand{\RR}{\mathbb{R}} %Real R
\newcommand{\CC}{\mathbb{C}} %Complex C

%%%%%%%%%%%%%%%%%%%%%%  Seitengroesse  %%%%%%%%%%%%%%%%%%%%%%%\setlength{\headheight}{12pt}
\setlength{\textheight}{22cm} \setlength{\textwidth}{16cm}
\setlength{\oddsidemargin}{0.3cm}
\setlength{\evensidemargin}{0.3cm}
\setlength{\evensidemargin}{0.5cm}
\setlength{\topmargin}{0cm} \setlength{\unitlength}{1mm}
%%%%%%%%%%%%%%%%%%%%%%%%%%%%%%%%%%%%%%%%%%%%%%%%%%%%%%%%%%%

\theoremstyle{plain}
\newtheorem{theorem} {Theorem} [section]
\newtheorem{lemma} [theorem]{Lemma}
\newtheorem{proposition}[theorem]{Proposition}
\newtheorem{corollary} [theorem]{Corollary}
\theoremstyle{definition}

\newtheorem{remark} [theorem]  {Remark}

\numberwithin{equation}{section}

\title{Biholomorphisms between Hartogs domains over homogeneous Siegel domains}
\date\today

\author{Aeryeong Seo}
\address{
School of Mathematics, Korea Institute for Advanced Study (KIAS),
 85 Hoegiro (Cheongnyangni-dong 207-43), Dongdaemun-gu,
 Seoul 130-722, Republic of Korea
phone number: 82-2-958-2636
}
\email{aeryeongseo@kias.re.kr}

\subjclass[2010]{32M15, 32H35, 32A07  }%
\keywords{biholomorphism, automorphism, homogeneous Siegel domain, proper holomorphic map}

\begin{document}

\maketitle
%\markboth{Aeryeong Seo}{}

\begin{abstract}
In this paper, we characterize the Hartogs domains over homogeneous Siegel domains
of type II and explicitly describe their automorphism groups.
Moreover we prove that any proper holomorphic map between Hartogs domains over
homogeneous Siegel domains over type II is a biholomorphism.
\end{abstract}

%%%%%%%%%%%%%%%%%%%%%%%%%%%%%%%%%%%%%%%%%%%%%%%%%%%%%%%%%%%%%
%%%%%%%%%%%%%%%%%%%%%%%%%%%%%%%%%%%%%%%%%%%%%%%%%%%%%%%%%%%%%
%%%%%%%%%%%%%%%%%%%%%%%%%%%%%%%%%%%%%%%%%%%%%%%%%%%%%%%%%%%%%
%%%%%%%%%%%%%%%%%%%%%%%%%%%%%%%%%%%%%%%%%%%%%%%%%%%%%%%%%%%%%
\section{Introduction}
Let $D$ be a domain in the complex Euclidean space $\mathbb C^r$.
Let $K_D\colon D\times \overline D\rightarrow \mathbb C$ be the Bergman kernel of $D$, i.e.,
$K_D$ is the reproducing kernel of the Hilbert space of
holomorphic $L^2$ functions on $D$.
Suppose that $K_D(z,z)$ is positive for any $z\in D$.
For a positive real number $s\geq 1$ and a positive integer $N$, 
let $D_{N,s}$ be the Hartogs domain over $D$ defined by
\begin{equation}\label{Hartogs domain}
D_{N,s} = \left\{(z,\zeta)\in D\times \mathbb C^N : ||\zeta||^2 < K_D(z,z)^{-s}\right\},
\end{equation}
where $||\cdot||$ denotes the standard Hermitian norm on $\mathbb C^N$.

The purpose of this paper is to prove the following two theorems:
\begin{theorem}\label{s=s'}
Let $D$ and $D'$ be homogeneous Siegel domains of type II .
Suppose that there exists a biholomorphism $f\colon D_{N,s}\rightarrow D'_{N',s'}$.
Then $D$ and $D'$ are biholomorphic, $N=N'$, $s=s'$
and $f$ is of the form $(f_1, f_2)$ with
\begin{equation}\label{induced auto}
f_1(z,\zeta) = \phi(z), \quad f_2(z,\zeta) = U(\zeta)\,\left( J\phi(z)\right)^s
\end{equation}
for some biholomorphism $\phi$ from $D$ to $D'$ 
and some unitary transformation $U$.
\end{theorem}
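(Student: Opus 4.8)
The plan is to use the Bergman kernel as the engine, since it transforms covariantly under biholomorphisms. The first task is to compute $K_{D_{N,s}}$ explicitly. Expanding an $L^2$ holomorphic function on $D_{N,s}$ as a power series $\sum_\alpha f_\alpha(z)\zeta^\alpha$ in the fibre variable and integrating over the ball fibres $\{\|\zeta\|^2 < K_D(z,z)^{-s}\}$, the Bergman space splits orthogonally over multi-indices $\alpha$, and each $f_\alpha$ lives in the weighted Bergman space on $D$ with weight $K_D(\cdot,\cdot)^{-s(|\alpha|+N)}$. Because $D$ is homogeneous, each such weighted kernel is a constant multiple of a power of $K_D$ on the diagonal: the transformation rule of the weight under $\mathrm{Aut}(D)$ shows that $K_{D,t}(z,z)/K_D(z,z)^{1+t}$ is $\mathrm{Aut}(D)$-invariant, hence constant by transitivity, so $K_{D,t}(z,z)=c_t\,K_D(z,z)^{1+t}$ (the hypothesis $s\ge 1$ guarantees these weighted spaces are nontrivial). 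Summing the resulting series by the multinomial theorem gives
\[
K_{D_{N,s}}\bigl((z,\zeta),(z,\zeta)\bigr)=K_D(z,z)^{1+sN}\,\Phi\bigl(K_D(z,z)^s\|\zeta\|^2\bigr)
\]
for an explicit real-analytic $\Phi$ on $[0,1)$, and more precisely $K_{D_{N,s}}\bigl((z,\zeta),(w,0)\bigr)=c_{sN}\,K_D(z,w)^{1+sN}$, a quantity that is independent of $\zeta$.

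Writing $\Omega=D_{N,s}$, $\Omega'=D'_{N',s'}$ and $(z',\zeta')=f(z,\zeta)$, the transformation law $K_\Omega(p,p)=|J_f(p)|^2\,K_{\Omega'}(f(p),f(p))$ becomes an explicit identity relating the two kernel formulas. The whole theorem then reduces to two structural claims about $f$: (I) $f$ respects the fibration $\pi\colon(z,\zeta)\mapsto z$, i.e. $f_1$ is independent of $\zeta$ and descends to a biholomorphism $\phi\colon D\to D'$; and (II) $f$ carries the zero section $\{\zeta=0\}$ to the zero section. I expect (I) to be the main obstacle, since a priori a biholomorphism may mix base and fibre directions. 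The tools I would use are the product structure of the kernel above, the fact that the unitary fibre rotations $(z,\zeta)\mapsto(z,U\zeta)$ belong to $\mathrm{Aut}(\Omega)$ and so decompose the Bergman space by $\zeta$-degree, and the boundary geometry of $\Omega$ (the smooth strongly pseudoconvex part $\{\|\zeta\|^2=K_D(z,z)^{-s}\}$ versus the degenerate part lying over $\partial D$). This is the same mechanism that underlies the companion description of $\mathrm{Aut}(\Omega)$, and I would establish (I) by showing that the base projection is determined by the Bergman geometry of $\Omega$. Granting (I), claim (II) follows from the rotational symmetry of the identity: for fixed $z$ the left-hand side depends on $\zeta$ only through $\|\zeta\|^2$, which forces the ball automorphism $f_2(z,\cdot)$ to commute with rotations and hence to fix the centre.

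Once (I) and (II) hold, the rest is a fibrewise computation that yields all the remaining assertions simultaneously. For each $z$, the map $f_2(z,\cdot)$ is a biholomorphism of the ball of radius $K_D(z,z)^{-s/2}$ onto the ball of radius $K_{D'}(\phi z,\phi z)^{-s'/2}$ fixing the centre, hence linear: $f_2(z,\zeta)=A(z)\zeta$ with $A(z)$ holomorphic and $A(z)^{*}A(z)=\lambda(z)^2 I$, where $\lambda(z)^2=K_D(z,z)^{\,s-s'}\,|J\phi(z)|^{2s'}$ after using $K_D(z,z)=K_{D'}(\phi z,\phi z)\,|J\phi(z)|^2$. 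In particular $N=N'$ (the fibre balls are biholomorphic) and $\dim D=\dim D'$. Each column $a(z)$ of $A(z)$ is a holomorphic vector with $\|a(z)\|^2=\lambda(z)^2$, so $\partial\bar\partial\log\lambda^2=\partial\bar\partial\log\|a\|^2\ge 0$ as the pullback of the Fubini--Study form; but $\partial\bar\partial\log\lambda^2=(s-s')\,\partial\bar\partial\log K_D$ and $\partial\bar\partial\log K_D$ is the positive definite Bergman metric of $D$, forcing $s\ge s'$. Applying the same argument to $f^{-1}$ gives $s'\ge s$, so $s=s'$. Then $\lambda^2=|J\phi|^{2s}$ and the Fubini--Study pullback vanishes, so each column is a constant vector times a holomorphic scalar; assembling the columns gives $A(z)=(J\phi(z))^s\,U$ for a constant unitary $U$. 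This is exactly $f_1(z,\zeta)=\phi(z)$ and $f_2(z,\zeta)=U(\zeta)(J\phi(z))^s$ with $D\cong D'$ via $\phi$, completing the proof modulo the fibration step (I).
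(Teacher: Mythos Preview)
Your fibration claim (I) is the whole difficulty, and you have not proved it; you only list ingredients (``product structure of the kernel'', ``unitary fibre rotations'', ``boundary geometry'') without explaining how they force $f_1$ to be independent of $\zeta$. None of these by itself does so: the kernel identity you wrote down is an equation in $(z,\zeta)$ involving $|Jf|^2$, and you cannot read off from it that $f$ respects $\pi$; the fibre rotations are automorphisms of $\Omega$, but you do not yet know $\mathrm{Aut}(\Omega')$, so you cannot say what $f$ conjugates them to. Note also that your derivation of (II) \emph{presupposes} (I), so as it stands neither structural claim is established. Since the entire fibrewise computation (linearity of $f_2(z,\cdot)$, the $s=s'$ argument, the Fubini--Study step) rests on (I), the proof is incomplete at exactly the point you flag.

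The paper's route is instructive precisely because it avoids ever proving (I) directly. It first establishes (II) on its own, without (I): when $D$ is not a ball, it embeds a product $\Delta\times M$ properly into $D$, takes radial boundary limits along the $\Delta$-factor (Mok--Tsai), and uses the strong pseudoconvexity of $\partial_0 D'_{N',s'}$ to see that a map landing there would have to collapse the $M$-factor, contradicting properness; hence $f(D\times\{0\})\subset D'\times\{0\}$. Then, rather than arguing fibrewise, it extends $f$ holomorphically across a Shilov boundary point via Tumanov's Bergman-kernel extension lemma, composes with known automorphisms of both sides to arrange $F|_{D\times\{0\}}=\mathrm{id}$ and $dF_{(0,0)}=\begin{pmatrix}I&b\\0&I\end{pmatrix}$, and runs a Cartan-type iteration $F^k$ to force $b=0$ and then $F=\mathrm{id}$. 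The equality $s=s'$ and the unitarity of the $\zeta$-block come from comparing Taylor coefficients of the polarized defining equation near $(0,0)$. Your post-(I) argument---the plurisubharmonicity inequality $(s-s')\,\partial\bar\partial\log K_D\ge 0$ and the Fubini--Study rigidity for the columns of $A(z)$---is correct and genuinely cleaner than the paper's Taylor-expansion proof of $s=s'$; if you can supply an honest proof of (I), or replace it by the paper's (II)-first plus boundary-extension strategy, you would have a valid and in parts more elegant argument.
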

\begin{corollary}\label{automorphism group}
Let $D$ be a homogeneous Siegel domain of type II.
Then the set of all biholomorphic self-maps 
$\text{Aut}(D_{N,s})$ consists of the maps of the form \eqref{induced auto}
with any $\phi\in \text{Aut}(D)$ and any unitary transformation $U$ in $\mathbb C^n$.
\end{corollary}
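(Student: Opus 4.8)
The plan is to obtain both inclusions from Theorem \ref{s=s'}, specialized to the diagonal case $D'=D$, $N'=N$, $s'=s$; only the converse inclusion requires a short direct verification. For the inclusion that every automorphism has the asserted form, note that any $f\in\text{Aut}(D_{N,s})$ is in particular a biholomorphism from $D_{N,s}$ onto itself, so Theorem \ref{s=s'} applies verbatim and yields $f=(f_1,f_2)$ as in \eqref{induced auto} for some self-biholomorphism $\phi$ of $D$---that is, $\phi\in\text{Aut}(D)$---and some unitary $U$. No further argument is needed here.

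For the converse I would fix $\phi\in\text{Aut}(D)$ and a unitary $U$ and verify directly that the map $f$ defined by \eqref{induced auto} is a well-defined automorphism of $D_{N,s}$. Since a homogeneous Siegel domain is simply connected and $J\phi$ is a nowhere-vanishing holomorphic function, the power $(J\phi)^s=\exp(s\log J\phi)$ is a globally defined holomorphic function once a branch of $\log J\phi$ is fixed; a different branch alters it by a unimodular constant that can be absorbed into $U$, so the family of maps in question does not depend on this choice. I would then invoke the transformation law of the Bergman kernel, $K_D(\phi(z),\phi(z))\,|J\phi(z)|^2=K_D(z,z)$, together with the fact that $U$ is an isometry, to compute
\[
\|f_2(z,\zeta)\|^2=\|\zeta\|^2\,|J\phi(z)|^{2s}<K_D(z,z)^{-s}\,|J\phi(z)|^{2s}=K_D(\phi(z),\phi(z))^{-s}
\]
whenever $(z,\zeta)\in D_{N,s}$, which shows $f(D_{N,s})\subseteq D_{N,s}$. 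Finally, the analogous map built from $\phi^{-1}$ and $U^{-1}$ (with the matching Jacobian factor) is holomorphic, of the same form, and a two-sided inverse of $f$, so $f$ is biholomorphic and lies in $\text{Aut}(D_{N,s})$.

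I do not anticipate a genuine obstacle: the corollary is the diagonal case $D=D'$ of Theorem \ref{s=s'} together with a routine check. The only points deserving care are the global definition of the fractional power $(J\phi)^s$ and the harmlessness of its branch ambiguity, and the correct application of the Bergman transformation formula---both standard for a simply connected domain with nowhere-vanishing Jacobian.
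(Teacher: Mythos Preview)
Your proposal is correct and matches the paper's approach: the corollary is stated without a separate proof precisely because it is the diagonal case of Theorem \ref{s=s'}, and the converse inclusion is the content of Section \ref{automorphism group for general domains}, where the paper verifies directly (via the Bergman transformation law) that maps of the form \eqref{induced auto} are automorphisms. Your treatment is in fact slightly more careful than the paper's, since you address the global definition and branch ambiguity of the fractional power $(J\phi)^s$, a point the paper leaves implicit.
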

\begin{theorem}\label{proper map}
Let $f\colon D_{N,s} \rightarrow D'_{N',s'}$ be a proper holomorphic map between
equidimensional Hartogs domains over homogeneous Siegel domains of type II
which are not biholomorphic to the unit ball.
Then $f$ is a biholomorphism.
\end{theorem}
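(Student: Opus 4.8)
The plan is to reduce Theorem~\ref{proper map} to the biholomorphic situation of Theorem~\ref{s=s'} by showing that a proper map must respect the Hartogs fibration and then be unbranched. Since $f$ is proper between equidimensional domains it is a finite branched covering, say of multiplicity $m$; write $f=(f_1,f_2)$ in the natural block form and record $r+N=r'+N'$. The first task is boundary regularity. The Bergman kernel $K_D$ is real-analytic on $D\times\overline D$, so the upper face $\Sigma=\{(z,\zeta): z\in D,\ \|\zeta\|^2=K_D(z,z)^{-s}\}$ of $\partial D_{N,s}$ is a real-analytic strongly pseudoconvex hypersurface, and likewise for $\Sigma'$. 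By the standard regularity theory for proper holomorphic maps across real-analytic strongly pseudoconvex boundaries (reflection together with Condition~R), $f$ extends holomorphically past a dense open subset of $\Sigma$ and carries it into $\partial D'_{N',s'}$, in fact into $\Sigma'$.

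The heart of the argument is to prove that $f$ preserves the fibration, i.e.\ that $f_1$ is independent of $\zeta$. Here the hypothesis that the domains are \emph{not} biholomorphic to the ball is essential: for a Hartogs domain of this type the face $\Sigma$ is spherical only when the base is ball-like, so under our hypothesis $\Sigma$ is non-spherical and its CR geometry rigidly singles out the complex-tangential fiber directions $\{z\}\times\mathbb C^N$. Exploiting this rigidity of the boundary CR structure, together with the fact that each fiber $\{z\}\times\{\|\zeta\|^2<K_D(z,z)^{-s}\}$ is an $N$-ball whose bounding sphere is sent by $f$ into $\Sigma'$, I would show that $f$ maps fibers to fibers. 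Consequently $f_1(z,\zeta)=\phi(z)$ for a proper holomorphic $\phi\colon D\to D'$, and for each fixed $z$ the map $\zeta\mapsto f_2(z,\zeta)$ is a proper holomorphic self-map of balls; finiteness of the fibers of $f$ forces $r=r'$ and hence $N=N'$.

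It then remains to see that $m=1$. When $N\ge 2$ Alexander's theorem makes every fiber map an automorphism of $B^N$, and compatibility with the real-analytic structures of $\Sigma$ and $\Sigma'$ forces these to be unitary and $\zeta$-linear, so the fiber contribution to $m$ is $1$ and $m=\deg\phi$. When $N=1$ the fiber maps are finite Blaschke products, and I would use their holomorphic dependence on $z$ together with the boundary matching $f(\Sigma)\subset\Sigma'$ to force the degree to be $1$. In either case the problem collapses to proving that the induced proper map $\phi$ between the homogeneous Siegel domains $D$ and $D'$ is a biholomorphism. This base rigidity, which I expect to establish through the transformation behaviour of the Bergman kernel under the proper map $f$ combined once more with the non-ball hypothesis, is the main obstacle; granting it, $\deg\phi=1$, so $m=1$ and $f$ is a biholomorphism whose explicit form is then supplied by Theorem~\ref{s=s'}.
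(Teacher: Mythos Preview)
Your route diverges from the paper's, and the step you yourself call ``the heart of the argument''---that $f$ carries each fiber $\{z\}\times B^N$ into a fiber---is left as a plan rather than a proof. You appeal to non-sphericality of $\Sigma$ and assert that its CR geometry ``rigidly singles out'' the $\zeta$-directions, but $\Sigma$ is strictly pseudoconvex, so the Levi form is nondegenerate in \emph{every} complex tangential direction and there is no evident pointwise CR invariant distinguishing the fiber directions; turning this intuition into a proof would require a genuine analysis (Chern--Moser curvature, Segre sets, or the like) that you do not supply. Ironically, the step you label ``the main obstacle''---rigidity of the induced proper map $\phi\colon D\to D'$ on the base---is already Tumanov's theorem, quoted in the introduction.

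The paper sidesteps fiber preservation altogether. It first shows only that $f(D\times\{0\})\subset D'\times\{0\}$, using the non-ball hypothesis to properly embed $\Delta\times M\hookrightarrow D$ (Lemma~\ref{product}) and then applying the Mok--Tsai radial-limit lemma together with strong pseudoconvexity of $\partial_0 D'_{N',s'}$ (Lemma~\ref{center to center}). It then attacks the branch locus $H=\{Jf=0\}$ directly: if a sequence in $H$ accumulates on $\partial_0 D_{N,s}$ with images accumulating on $\partial_0 D'_{N',s'}$, Wong--Rosay scaling at those strongly pseudoconvex boundary points produces in the limit a proper holomorphic self-map of the unit ball, hence an automorphism by Alexander, contradicting the presence of branch points. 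Thus $f(H)\subset D'\times\{0\}$; combined with $f(D\times\{0\})\subset D'\times\{0\}$ this forces $Jf$ to vanish on a set of the wrong codimension, so $H=\varnothing$. Since $D_{N,s}$ is simply connected, the unbranched proper map $f$ is a biholomorphism. No holomorphic boundary extension, no Condition~R, and no fiberwise application of Alexander are needed.
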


The motivation of this paper is to generalize the work of Ahn-Byun-Park in \cite{Ahn_Byun_Park}
and Tu-Wong in \cite{Tu_Wang}.
\begin{theorem}[Ahn, Byun and Park \cite{Ahn_Byun_Park}]
Let $D$ be a bounded symmetric domain of classical type which is not biholomorphic to the unit ball.
Then the set of all biholomorphic self-maps 
$\text{Aut}(D_{N,s})$ consists of holomorphic automorphisms of the form 
$\Phi=(\Phi_1, \Phi_2) \colon D_{N,s}\rightarrow D_{N,s}$ with
\begin{equation}
\Phi_1(z,\zeta) = \phi(z), \quad \Phi_2(z,\zeta) = U(\zeta)\,\left( J\phi(z)\right)^s
\end{equation}
for any $\phi\in \text{Aut}(D)$ and unitary transformation $U$.
\end{theorem}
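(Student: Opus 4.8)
The plan is to establish the two inclusions separately. The inclusion ``$\supseteq$'' is the transformation law for the Bergman kernel: for $\phi\in\text{Aut}(D)$ one has $K_D(\phi(z),\phi(z))\,|J\phi(z)|^2=K_D(z,z)$, so a map $\Phi=(\Phi_1,\Phi_2)$ of the form \eqref{induced auto} satisfies
\[
\|\Phi_2(z,\zeta)\|^2=\|\zeta\|^2\,|J\phi(z)|^{2s}<K_D(z,z)^{-s}\,|J\phi(z)|^{2s}=K_D(\phi(z),\phi(z))^{-s},
\]
so $\Phi$ maps $D_{N,s}$ into itself; since these maps form a group, each is a biholomorphic self-map with inverse again of the same form. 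The substance is the reverse inclusion: every $F=(f,g)\in\text{Aut}(D_{N,s})$ has the form \eqref{induced auto}. I note one clean shortcut: a classical bounded symmetric domain that is not the ball is biholomorphic to a homogeneous Siegel domain of type II, and if $\psi\colon D\to\Omega$ realizes this, then $\Psi(z,\zeta)=(\psi(z),(J\psi(z))^s\zeta)$ identifies $D_{N,s}$ with $\Omega_{N,s}$; a direct chain-rule computation shows that conjugation by $\Psi$ carries maps of the form \eqref{induced auto} to maps of the form \eqref{induced auto}, so the result follows from Corollary \ref{automorphism group}. Nonetheless I will outline the direct argument, since it isolates the genuine difficulty and is the argument that generalizes.

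The heart of the direct proof is to show that $F$ preserves the fibration $\pi(z,\zeta)=z$ together with its zero section $Z=D\times\{0\}$; this is the step I expect to be the main obstacle, and it is where the hypothesis that $D$ is not the ball enters. The tool is the explicit Bergman kernel of $D_{N,s}$. Decomposing the Bergman space by degree of homogeneity in $\zeta$ and integrating $\|\zeta\|^{2m}$ over each fibre ball of radius $K_D(z,z)^{-s/2}$ produces the weight $K_D(z,z)^{-s(m+N)}$ and reduces everything to weighted Bergman kernels of $D$; because $D$ is homogeneous each such kernel is a constant multiple of $K_D(z,z)^{1+s(m+N)}$, and summing over $m$ gives the closed form
\[
K_{D_{N,s}}\bigl((z,\zeta),(z,\zeta)\bigr)=c\,K_D(z,z)^{1+sN}\,G(X),\qquad X:=\|\zeta\|^2\,K_D(z,z)^s\in[0,1),
\]
for an explicit real-analytic $G$ with $G(0)>0$ and $G\to\infty$ as $X\to1$. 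Since $F$ is an isometry of the Bergman metric $\partial\bar\partial\log K_{D_{N,s}}$, it preserves the invariant data built from this potential. Intuitively $Z=\{X=0\}$ is the locus of maximal symmetry, being the fixed-point set of the compact group of fibre rotations $(z,\zeta)\mapsto(z,V\zeta)$, $V\in U(N)$; turning this heuristic into the rigorous statements $F(Z)=Z$ and $\pi\circ F=\phi\circ\pi$ is the delicate part, and it is precisely here that one uses that $D$ is not the ball, which prevents an automorphism from interchanging base and fibre directions (for the ball, extra automorphisms appear, and for special parameters $D_{N,s}$ is itself a ball).

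Granting fibre- and zero-section-preservation, the remaining steps are routine. The induced map $\phi:=f|_Z$ lies in $\text{Aut}(D)$. For fixed $z$ the map $\zeta\mapsto g(z,\zeta)$ is a biholomorphism between the balls of radii $K_D(z,z)^{-s/2}$ and $K_D(\phi(z),\phi(z))^{-s/2}$ sending center to center, hence a scaled unitary by Cartan's linearity theorem; the ratio of radii equals $|J\phi(z)|^s$ by the transformation law, so $g(z,\zeta)=U(z)\,\zeta\,(J\phi(z))^s$ with $U(z)$ unitary-valued (the holomorphic phase being absorbed into the well-defined branch of $(J\phi)^s$ on the contractible domain $D$). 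Dividing $g$ by the nonvanishing holomorphic scalar $(J\phi(z))^s$ shows $U$ is holomorphic in $z$ with values in the compact group $U(N)$; since a nonconstant holomorphic map would have open image while $U(N)$ has empty interior in the space of matrices, $U$ is constant. This yields exactly \eqref{induced auto}, and combining the two inclusions proves the theorem; the same scheme, carried out for a map $D_{N,s}\to D'_{N',s'}$, is what underlies Theorem \ref{s=s'} and Corollary \ref{automorphism group}.
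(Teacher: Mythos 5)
Your ``shortcut'' route is correct and is in fact exactly how this statement sits inside the paper: the paper never reproves the Ahn--Byun--Park theorem, but subsumes it under Theorem \ref{s=s'} and Corollary \ref{automorphism group}, and the transfer you describe is precisely the paper's own remark around \eqref{biholo map} (a bounded symmetric domain $D$ of classical type is biholomorphic to a homogeneous Siegel domain $\Omega$ of type II by Pyateskii-Shapiro, $(z,\zeta)\mapsto(\psi(z),(J\psi(z))^s\zeta)$ identifies $D_{N,s}$ with $\Omega_{N,s}$, and conjugation preserves the form \eqref{induced auto} up to a unimodular constant absorbed into $U$, using the chain rule for Jacobians and a fixed branch of $(J\psi)^s$ on the contractible domain $D$). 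This reduction is non-circular, since the paper's proof of Corollary \ref{automorphism group} (via Lemma \ref{center to center}, Proposition \ref{rigidity of holo} and Tumanov's Lemma \ref{Tumanov}) nowhere invokes the Ahn--Byun--Park result.

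Your ``direct argument,'' by contrast, has a genuine gap at exactly the point you flag: you \emph{grant} that $F$ preserves the zero section and the fibration, but that is the entire content of the theorem, and the heuristic you offer (that $Z=D\times\{0\}$ is the fixed locus of the fibre rotations) proves nothing unless you already know $F$ normalizes that rotation group, which is equivalent to what is being claimed. The paper fills this hole with a completely different mechanism: Lemma \ref{product} produces a properly embedded $\Delta\times M$ in $D$ (this is where ``not the ball'' enters, via the normal $j$-algebra decomposition $\frak g=\frak r+j\frak r+\frak Z+\frak g_1$), Lemma \ref{strongly pseudoconvex} shows every point of $\partial_0 D_{N,s}$ is strongly pseudoconvex, and Lemma \ref{center to center} combines these with the Mok--Tsai radial-limit lemma (Lemma \ref{radial extension}): if $F$ moved a point of $Z$ off $Z'$, the boundary values of $F$ on a disc factor would land in the strongly pseudoconvex stratum $\partial_0 D'_{N',s'}$, forcing $F\circ\imath$ to be independent of $M$ and contradicting properness. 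Note also that the paper never proves fibre-preservation ($\pi\circ F=\phi\circ\pi$) as an intermediate step at all; instead it normalizes by induced automorphisms so that the map fixes $D\times\{0\}$ pointwise, applies Proposition \ref{rigidity of holo} at a Shilov boundary point where $f$ extends (Lemma \ref{Tumanov}), and kills the remaining freedom with the iteration argument $F^j(\xi)=\xi+jP_k(\xi)+O(|\xi|^{k+1})$. Finally, a smaller but real error in your ``routine'' part: a holomorphic matrix-valued map $z\mapsto U(z)$ with values in $U(N)$ is indeed constant, but not ``because a nonconstant holomorphic map has open image'' (it need not, e.g.\ $z\mapsto\mathrm{diag}(z,0)$); the correct argument applies $\partial\overline\partial$ to the identity $\sum_k|u_{kj}(z)|^2\equiv 1$ to conclude $\partial u_{kj}\equiv 0$ for every entry.
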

In their original work, Tu and Wang studied Hua domains over bounded symmetric 
domains of classical type and hence their result is more general. 
However for simplicity of exposition, we only state their theorems in case of Hartogs domains.
\begin{theorem}[Tu and Wang \cite{Tu_Wang}]
\begin{enumerate}
\item
Let $D$ and $D'$ be irreducible bounded symmetric domains of classical type.
Let $f\colon D_{N,s}\rightarrow D'_{N',s'}$ be a biholomorphism.
Then $N=N'$, $s=s'$ and there exist an automorphism $\Phi$ of $D'_{N',s'}$, 
a complex linear isomorphism $A$ of $\mathbb C^{\dim_{\mathbb C} D}$ 
and a unitary transformation $U$ of $\mathbb C^N$ such that 
\begin{equation}\label{form}
\Phi\circ f(z,\zeta) = (Az, U\zeta).
\end{equation}
In particular any automorphism of $D_{N,s}$ is of the form \eqref{induced auto}.
\item
Let $f\colon D_{N,s} \rightarrow D'_{N',s'}$ be a proper holomorphic map between
equidimensional Hartogs domains over bounded symmetric domains of classical type.
Then $f$ is a biholomorphism.
\end{enumerate}

\end{theorem}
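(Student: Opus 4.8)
The plan is to reduce the rigidity of $f$ to the rigidity of proper maps between the base domains together with Alexander's theorem on proper self-maps of balls. Write $\pi\colon D_{N,s}\to D$ and $\pi'\colon D'_{N',s'}\to D'$ for the Hartogs projections, whose fibers $F_z=\{\zeta:\|\zeta\|^2<K_D(z,z)^{-s}\}$ are complex $N$-dimensional balls. The central claim is that any proper holomorphic $f$ must respect these fibrations, i.e. $\pi'\circ f=g\circ\pi$ for a proper holomorphic $g\colon D\to D'$. Granting this, $f$ has the block form $f(z,\zeta)=(g(z),f_2(z,\zeta))$, the base map $g$ is proper, and a dimension count (using that $r+N=r'+N'$) forces $g$ to be equidimensional; since $D,D'$ are not biholomorphic to the ball, rigidity of equidimensional proper maps between homogeneous Siegel domains makes $g$ a biholomorphism, so $r=r'$ and hence $N=N'$. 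Composing $f$ with the automorphism of $D'_{N',s'}$ induced via Corollary \ref{automorphism group} by $g^{-1}$ then reduces us to the case $D=D'$, $g=\mathrm{id}$, after which it remains only to analyze the fiber direction and match with the normal form of Theorem \ref{s=s'}.

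The heart of the argument, and the step I expect to be the main obstacle, is proving that $f$ respects the fibrations; I would approach it through boundary behaviour. After passing to bounded realizations by a Cayley transform, the boundary of $D_{N,s}$ decomposes into the smooth strongly pseudoconvex part $\Sigma_1=\{(z,\zeta):z\in D,\ \|\zeta\|^2=K_D(z,z)^{-s}\}$ lying over the interior of $D$, and the degenerate part over $\partial D$, along which the ball fibers collapse. Since $f$ is then a proper map between bounded pseudoconvex domains, the plan is to extend it continuously up to $\Sigma_1$ and to show that strongly pseudoconvex boundary points are carried to strongly pseudoconvex boundary points, so that $f(\Sigma_1)\subseteq\Sigma_1'$.

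With the boundary behaviour in hand, I would identify the fibers intrinsically: the ball fibers $F_z$ are exactly the $N$-dimensional complex submanifolds of $D_{N,s}$ whose topological boundary is a single sphere contained in $\Sigma_1$. Because $f$ maps $\Sigma_1$ into $\Sigma_1'$ and, being finite, sends complex submanifolds to analytic sets of the same dimension, this property is preserved; comparing with the corresponding distinguished submanifolds of $D'_{N',s'}$ shows that each fiber of $\pi$ is carried into a fiber of $\pi'$, which yields the descent $\pi'\circ f=g\circ\pi$ and, by restriction to fibers, the properness of $g$. The technical difficulties are concentrated precisely here: guaranteeing the boundary extension and the invariance of strong pseudoconvexity for these a priori only piecewise-smooth (and originally unbounded) domains, and making rigorous the characterization of the fibers by their boundary spheres.

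Finally, I would record the two rigidity inputs and the role of the ball exclusion. For the base, when $D,D'$ are bounded symmetric domains of classical type this is exactly part (2) of the theorem of Tu and Wang quoted above, and in the homogeneous setting one invokes the corresponding statement for equidimensional proper maps; the ball must be excluded because $B^r$ admits nontrivial proper self-maps, which would destroy the biholomorphy of $g$. For the fiber direction, once $g=\mathrm{id}$ each slice $f_2(z,\cdot)$ is a proper self-map of the $N$-ball $F_z$: for $N\ge 2$ Alexander's theorem makes it an automorphism, and for $N=1$ the non-holomorphic dependence of the fiber radius $K_D(z,z)^{-s/2}$ on $z$ prevents any nontrivial $z$-dependent Blaschke factor from being holomorphic, again forcing degree one. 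The same non-holomorphy of the radius kills the Möbius part of the ball automorphisms in all cases, leaving a holomorphically varying unitary rotation $U(z)$, which connectivity and holomorphy pin down to the form $f_2(z,\zeta)=U\zeta$ of Theorem \ref{s=s'}; hence $f$ is a biholomorphism.
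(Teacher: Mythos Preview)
This theorem is quoted from \cite{Tu_Wang} and is not proved in the present paper; the relevant comparison is with the paper's own proofs of its generalizations, Theorem~\ref{s=s'} and Theorem~\ref{proper map}. Relative to those, your route is genuinely different. You aim to show that $f$ preserves the entire ball fibration $\pi\colon D_{N,s}\to D$ and then analyze base and fiber separately. The paper never attempts this: it only shows the much weaker fact $f(D\times\{0\})\subset D'\times\{0\}$ (Lemma~\ref{center to center}), and it does so not through boundary extension of $f$ but by embedding $\Delta\times M\hookrightarrow D$, taking radial boundary limits as in Lemma~\ref{radial extension}, and using that a holomorphic disc cannot land in the strongly pseudoconvex part $\partial_0 D'_{N',s'}$. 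For Theorem~\ref{proper map} the paper then argues on the branch locus: if $Jf$ vanished along a hypersurface accumulating at $\partial_0 D_{N,s}$, Pinchuk-type scaling at a strongly pseudoconvex boundary point would produce a proper self-map of the unit ball in $\mathbb C^{r+N}$ which, by Alexander, is an automorphism---contradiction; hence $f$ is unbranched and, by simple connectivity, biholomorphic. For Theorem~\ref{s=s'} the paper extends $f$ holomorphically across a Shilov boundary point via Tumanov's Lemma~\ref{Tumanov}, normalizes with automorphisms of the form \eqref{induced auto}, reads off $s=s'$ and the unitary fiber part from Proposition~\ref{rigidity of holo}, and finishes with a Cartan-type iteration $F^k\to\tilde F$ to kill the off-diagonal block and the higher-order terms.

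The step you yourself flag as ``the main obstacle'' is in fact a genuine gap. Your proposed intrinsic description of the fibers---``the $N$-dimensional complex submanifolds of $D_{N,s}$ whose topological boundary is a single sphere contained in $\Sigma_1$''---is not established, and there is no evident reason it should hold: nothing rules out other $N$-dimensional complex submanifolds (graphs over a ball fiber, for instance) whose closures meet $\partial D_{N,s}$ only inside $\Sigma_1$. Without a rigorous uniqueness statement of this kind you cannot conclude that $f$ sends fibers to fibers, and the whole descent $\pi'\circ f=g\circ\pi$ collapses. Note also that your invocation of ``part~(2) of the theorem of Tu and Wang'' for the base map is misplaced: that statement concerns the Hartogs domains $D_{N,s}$, not the bases $D,D'$; what you actually need there is Tumanov's rigidity for equidimensional proper maps between Siegel domains (stated just after the Tu--Wang theorem in the introduction). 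The paper sidesteps all of this by never claiming fibration preservation and instead working only with the zero section, the Bergman-kernel based extension of Lemma~\ref{Tumanov}, and the local analysis of Proposition~\ref{rigidity of holo}.
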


When $D$ is the unit ball, $D_{N, s}$ is biholomorphic to a generalized ellipsoid.
In this case  Naruki \cite{Naruki} showed that every biholomorphism has the form \eqref{form}
and Kodama \cite{Kodama} proved that every automorphism has the form \eqref{induced auto}.

The automorphism group of Hartogs domains over minimal homogeneous domain
in $\mathbb C^n$ with center $0$ was described in \cite{Zhao_Wang_Hao}.
Note that every bounded homogeneous domain can be 
realized as a homogeneous Siegel domain of type II.
One of the difficulties in proving Corollary \ref{automorphism group}
for general bounded homogeneous domains is linked with fact that  
$D_{N,s}$ might not be circular if $D$ is not a bounded symmetric domain.
Here one says that a domain is circular if
$(z_1, z_2,\ldots, z_n)\in D$ implies that also 
$(e^{i\theta}z_1, e^{i\theta}z_2, \ldots, e^{i\theta}z_n)$
belongs to the domain for every $\theta \in \mathbb R$.
By a work of Bell \cite{Bell}, if the bounded domain is circular,
every proper holomorphic map between equidimensional domains can be extended 
holomorphically over the boundary. 
Hence we can use the boundary structure of the domains.
Moreover, if $D$ is a circular domain containing $0$, 
then the automorphism of $D$ fixing $0$ is a linear map
(cf. \cite{Greene_Kim_Krantz} Corollary 1.3.2.).

Recently, in \cite{Ishi_Park_Yamamori}, the Bergman kernel of the domain 
\eqref{Hartogs domain} over the bounded homogeneous domain was explicitly obtained
by Ishi-Park-Yamamori.
With this explicitly expressed Bergman kernel, we can exploit the Tumanov's method in \cite{Tumanov}. In this paper he proved the following:
\begin{theorem}[Tumanov \cite{Tumanov}]
Let $D$ and $D'$ be equidimensional Siegel domains of type II.
Then any proper holomorphic map $f\colon D\rightarrow D'$ is biholomorphic and rational.
\end{theorem}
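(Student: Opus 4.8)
The plan is to reduce the statement to the boundary CR geometry of the two domains and then to exploit the large affine symmetry that every Siegel domain of type II carries. Write $D=\{(z,u)\in\CC^n\times\CC^m:\ii z-F(u,u)\in\Omega\}$, where $\Omega\subset\RR^n$ is a regular open convex cone and $F$ is a nondegenerate $\Omega$-Hermitian form, and similarly for $D'$. Recall that $D$ is invariant under the real translations $(z,u)\mapsto(z+a,u)$ with $a\in\RR^n$, under the Heisenberg translations $(z,u)\mapsto\bigl(z+2iF(u,b)+iF(b,b),\,u+b\bigr)$ with $b\in\CC^m$, and under the linear maps induced by $\mathrm{GL}(\Omega)$. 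A short computation shows that $\ii z-F(u,u)$ is left invariant by the first two families, so the nilpotent group $\mathcal N$ they generate preserves the boundary orbit $V=\{\ii z-F(u,u)=0\}$ lying over the vertex of $\Omega$ and acts on $V$ simply transitively. The decisive feature is that $V$ is a real-\emph{algebraic} CR submanifold: it is the quadric cut out by the $n$ real equations $\ii z=F(u,u)$, Levi-nondegenerate because $F$ is. Since $f$ is proper between equidimensional domains it is a finite branched covering of some degree $\mu$, and the two assertions to be proved are that $\mu=1$ and that $f$ is the restriction of a rational map.

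The first step is boundary regularity near the vertex orbit. Because $\Omega$ is a cone with vertex at the origin, near any point of $V$ the domain $D$ is a wedge whose edge is $V$, and I would use the edge-of-the-wedge and CR-extension machinery, together with the standard boundary estimates for proper maps of pseudoconvex domains, to show that $f$ extends smoothly (and as a CR map on $V$) to a neighborhood of a generic point of $V$. The genuinely delicate point, and the heart of the argument, is to show that this extension carries $V$ into $V'$, i.e. that the deepest boundary stratum is mapped to the deepest boundary stratum. I would characterize $V$ intrinsically as the stratum over the vertex of the cone, equivalently the locus where the Levi form of $\partial D$ degenerates maximally, and argue that a proper holomorphic map must preserve this stratification: the rank of the boundary Levi form and the dimensions of the germs of complex-analytic varieties contained in the boundary are invariants that a boundary CR map cannot increase, and this forces $f(V)\subset V'$.

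Once $V\to V'$ is established, the induced boundary map is a CR map between the Levi-nondegenerate real-algebraic quadrics $V$ and $V'$. By the reflection principle for nondegenerate real-algebraic CR manifolds this CR map extends holomorphically across $V$ and is algebraic. Transporting this extension by the simply transitive actions of $\mathcal N$ on $V$ and $\mathcal N'$ on $V'$, and by the cone-dilations coming from $\mathrm{GL}(\Omega)$, propagates the algebraic continuation over all of $D$; hence $f$ is the restriction of a rational map, which is the second conclusion.

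It remains to exclude branching. Using the simply transitive action of $\mathcal N$ on $V$ and of $\mathcal N'$ on $V'$ I would normalize $f$ so that a chosen point $p\in V$ is sent to $f(p)\in V'$ and the boundary differential there is put in standard position; the nondegeneracy of the quadric Levi forms then forces the boundary map to be a local CR diffeomorphism at $p$, so $f$ is locally biholomorphic near $p$ and the generic covering degree is $\mu=1$, giving that $f$ is a biholomorphism. The step I expect to be the main obstacle is the second one: proving that a proper map cannot send the maximally degenerate stratum $V$ into a shallower, less degenerate stratum of $\partial D'$. This is a global constraint on how a proper map interacts with the stratification of a Siegel-domain boundary and with the associated filtration of the nilpotent symmetry group, and it is precisely here that the affine homogeneity of Siegel domains of type II and the finite-dimensionality of their CR-automorphism algebra must be used in an essential way.
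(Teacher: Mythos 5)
First, a point of calibration: the paper does not prove this theorem at all --- it is quoted from \cite{Tumanov}, and the only ingredient of the proof the paper records is Lemma \ref{Tumanov}, a local extension criterion for proper maps phrased entirely in terms of the Bergman kernels. For Siegel domains of type II those hypotheses are checkable because the kernel is Gindikin's explicit rational expression \eqref{Bergman kernel of Siegel}, which extends across the boundary; extension and rationality of $f$ then come out of the kernel transformation identity, with no boundary smoothness ever needed. Your proposal takes a genuinely different, CR-geometric route, but it has a gap exactly at the point this kernel machinery is designed to bypass: your first step, boundary regularity. The tools you invoke do not apply here. Bell/Diederich--Fornaess type boundary estimates for proper maps require bounded domains with smooth boundary, whereas a Siegel domain is unbounded and $\partial D$ is not smooth along the Shilov stratum $V$: for $n>1$ the set $V$ has real codimension $n$ in $\mathbb{C}^{n+m}$, so it is an edge of $\partial D$, not a hypersurface point. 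The edge-of-the-wedge theorem needs holomorphy on two opposite wedges with matching boundary values, which you do not have; and CR-extension machinery applies to CR functions already defined on $V$, while $f$ is defined only inside $D$ and a priori has no better than almost-everywhere Fatou-type limits on $V$. Without continuity of $f$ up to $V$, the reflection principle in your third step never gets started, so the chain collapses at its first link.

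Second, your intrinsic characterization of $V$ is backwards, and the stratum-preservation argument built on it runs in the wrong direction. $V$ is not ``the locus where the Levi form of $\partial D$ degenerates maximally'': since $\partial D$ is not smooth along $V$ (for $n>1$) it has no Levi form there, and $V$ is in fact the stratum through which \emph{no} germ of positive-dimensional complex variety passes --- over a relative interior point of a $k$-dimensional face of the cone ($k\geq 1$) the boundary contains complex discs in the complexified face directions, while over the vertex it contains none because $F(u,u)=0$ forces $u=0$. That disc-dimension invariant is the right one, but the monotonicity you assert (``invariants that a boundary CR map cannot increase'') is unjustified in the direction you need: a holomorphic map can perfectly well send a disc-free boundary point onto a stratum containing discs, since those discs need not be images of anything; to exclude this you would have to pull the discs of $\partial D'$ back through $f$, i.e.\ you would already need the local invertibility at the boundary that your final step is supposed to establish. (Compare Lemma \ref{Shilov boundary} of the paper, whose peak-function argument is likewise stated only for biholomorphisms, where $f^{-1}$ is available.) So both the analytic foundation and the key geometric step $f(V)\subset V'$ are genuinely missing; the Bergman-kernel approach recorded in Lemma \ref{Tumanov} is what fills precisely these holes.
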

For the proof of it, 
he presented that the proper holomorphic maps between equidimensional domains 
can be locally extended over the boundary 
under some nice conditions on the Bergman kernel of the source and target domains.
For more detail see \cite{Tumanov} or Lemma \ref{Tumanov} below.
%Using the we could obtain the conclusion of Lemma \ref{rigidity of holo}.

The internal organization of the article is as follows.
In Section \ref{preliminaries}, we recall the homogeneous Siegel domains of type II
and their Bergman kernels. The explicit expression of the Bergman kernel of $D_{N,s}$ 
over the homogeneous Siegel domain of type II will be given.
In Section \ref{automorphism group for general domains} we describe the group 
of automorphisms of the form \eqref{induced auto} 
and show that the boundary points that belong to 
$\{ (z,\zeta)\in D\times \mathbb C^N : ||\zeta||^2 = K_D(z,z)^{-s}\}$ 
are strongly pseudoconvex for general domains 
when the Bergman metric is positive definite.
In Section \ref{The proof} we provide the proof of Theorem \ref{s=s'},
Corollary \ref{automorphism group} and Theorem \ref{proper map}.

\bigskip
{\bf Acknowledgement}
The author would like to thank Ngaiming Mok 
for the discussions on this subject and useful comments.

\section{preliminaries}\label{preliminaries}
\subsection{Homogeneous Siegel domains of type II and their properties}
In this section, we recall the definition and properties of homogeneous Siegel domains of type II
that we need later. For more detail, see \cite{Kaneyuki,Pya}.

A domain $V$ in $\mathbb R^n$ is called a {\it convex cone} if the following three conditions are satisfied:
\begin{enumerate}
\item For any $x\in V$ and for any $\lambda >0$, $\lambda x \in V$,
\item If $x,\,y\in V$, then $x+y \in V$,
\item $V$ contains no entire straight lines.
\end{enumerate}
A convex cone $V$ is called {\it homogeneous} if the linear automorphism group, $G(V) = \{ A\in GL(\RR^n) : AV = V\}$, acts transitively on $V$. 
A map $F \colon \mathbb C^m \times \mathbb C^m \rightarrow \mathbb R^n$ is called a {\it $V$-Hermitian form} if the following conditions are satisfied:
\begin{enumerate}
\item $F(u,v)$ is $\mathbb C$-linear in $u$,
\item $\overline{F(u,v)} = F(v,u)$,
\item $F(u,u) \in \overline{V}$ where $\overline{V}$ is the closure of $V$ in $\RR^n$,
\item $F(u,u)=0$ implies $u=0$.
\end{enumerate}
The {\it Siegel domain of type II} is defined by
\begin{equation} \label{Siegel domain}
D(V,F) = \{(w,u) \in \CC^n \times \CC^m : \text{Im}{w} - F(u,u) \in V\}.
\end{equation}
% If $\CC^m = \{0\}$, then $D(V)$ is called the {\it Siegel domain of type I}.
When $\text{Aut}(D(V,F))$ acts transitively on $D(V,F)$, it is called {\it homogeneous}
and when the affine linear group of $\mathbb C^n\times \mathbb C^m$ which 
preserves $D(V,F)$ acts transitively on $D(V,F)$, it is called {\it affine homogeneous}.

\begin{theorem}[Pyateskii-Shapiro \cite{Pya}]
\begin{enumerate}
\item
Every homogeneous bounded domain in the complex Euclidean space 
is holomorphically equivalent to a homogeneous Siegel domain of type II.

\item The followings are equivalent: $(i)$ $D(V,F)$ is homogeneous, 
$(ii)$ $D(V,F)$ is affine homogeneous, $(iii)$ $V$ is homogeneous. 
\item
Let $G_a$ be the affine automorphism groups of a homogeneous Siegel domain of type II $D(V,F)$.
Let $H$ be the isotropy subgroup of $G_a$ at $(0,0)\in \mathbb C^n\times \mathbb C^m$
and $K_a$ the isotropy subgroup of $G_a$ 
for some fixed $(ir,0)\in \mathbb C^n\times \mathbb C^m$ with $r\in \mathbb R^n$.
Let $\frak g_a$, $\frak k_a$ be the Lie algebras of $G_a$, $K_a$ respectively.
Then one has $G_a = H\cdot \mathbb R^n \mathbb C^m$, $K_a\subset H$ and 
\begin{equation}
\frak g_a = \frak k_a + j\mathbb R^n +\mathbb R^n +\mathbb C^n
\end{equation}
where $j$ is a linear endomorphism of $\frak g_a$ induced from the complex structure of $D(V,F)$.

\item  
Define $\frak g = j\mathbb R^n +\mathbb R^n +\mathbb C^n$ which is a normal $j$-algebra
corresponding to $D(V,F)$ (for more detail, see \cite{Pya, Kaneyuki}).
Then there exist an $1$-dimensional ideal $\frak r$, a subalgebra $\frak Z$
and a normal $j$-subalgebra $\frak g_1\subset \frak g$ 
such that $\frak g= \frak r + j\frak r +\frak Z + \frak g_1$ 
satisfying 
\begin{equation}
[\frak r +j\frak r, \frak g_1]=0, \quad [\frak Z, \frak g_1]\subset \frak Z.
\end{equation}
Remark that the normal $j$-subalgebra $\frak r + j\frak r + \frak Z$
 corresponds to the Siegel domain of type II which is biholomorphic to the unit ball.
Besides $\frak r + j\frak r$ corresponds to the upper half plane in $\mathbb C$.
\end{enumerate}
\end{theorem}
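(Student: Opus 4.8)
The plan is to funnel all four assertions through one Lie-theoretic object, the \emph{normal $j$-algebra}, which simultaneously records the complex structure of the domain, the transitive group action, and the underlying cone $V$; the four parts then appear as successive refinements of a single structural picture rather than as independent statements.

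For part (1) I would begin with a homogeneous bounded domain $D \subset \mathbb{C}^r$ and its automorphism group $G = \text{Aut}(D)$, a real Lie group acting transitively. The essential---and, I expect, hardest---step is to extract a \emph{split solvable} (triangular) subgroup $S \subset G$ that already acts simply transitively on $D$. Granting this, $\mathfrak{s} = \operatorname{Lie}(S)$ is identified with the tangent space $T_o D$ at a base point $o$, whence it inherits the almost complex structure $j$ of $D$ together with the bilinear form supplied by the Koszul/Bergman form; these data are exactly the axioms of a normal $j$-algebra. One then shows that every normal $j$-algebra arises as the $j$-algebra of some $D(V, F)$, and that the simply transitive action reconstitutes a biholomorphism $D \simeq D(V, F)$.

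For part (2), the constructive direction ``$V$ homogeneous $\Rightarrow$ affine homogeneous'' lifts each $g \in G(V)$ to the affine map $(w, u) \mapsto (g\,w,\ \rho(g)\,u)$, where $\rho(g)$ is the linear isomorphism satisfying $F(\rho(g)u, \rho(g)v) = g\,F(u,v)$ furnished by the homogeneity of the pair $(V,F)$, and supplements these with the translations $(w,u) \mapsto (w + a,\ u)$ for $a \in \mathbb{R}^n$ and the shears $(w,u) \mapsto (w + 2iF(u,b) + iF(b,b),\ u+b)$ for $b \in \mathbb{C}^m$; a direct check on $\operatorname{Im} w - F(u,u)$ shows these preserve $D(V,F)$ and act transitively. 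The implication ``affine homogeneous $\Rightarrow$ homogeneous'' is immediate, and for the converse I would use the rigidity of Siegel domains to argue that every biholomorphism descends to a linear automorphism of $V$, forcing $G(V)$ to be transitive. For part (3) I would solve directly for the affine maps $(w,u)\mapsto(Aw+a, Bu+b)$ preserving $D(V,F)$: matching imaginary parts pins down $a \in \mathbb{R}^n$, compatibility of $A$ with $V$, and an intertwining of $B$ with $A$ through $F$, exhibiting $G_a$ as the semidirect product of the origin-stabilizer $H$ (acting linearly) with the translations and shears; differentiating the factors yields the stated decomposition of $\mathfrak{g}_a$.

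For part (4) I would invoke the root-space decomposition of the split solvable normal $j$-algebra $\mathfrak{g}$ under its diagonal subalgebra, where the roots occur in Vinberg's characteristic pattern $\tfrac12(\lambda_k - \lambda_l)$, $\tfrac12\lambda_k$, $\lambda_k$, $\tfrac12(\lambda_k + \lambda_l)$. Picking out the highest index produces the one-dimensional ideal $\mathfrak{r}$ (spanned by the root vector for the top $\lambda_k$) with $j\mathfrak{r}$ its image; the root vectors not involving that index span a normal $j$-subalgebra $\mathfrak{g}_1$, and the remaining cross terms span $\mathfrak{Z}$. The relations $[\mathfrak{r}+j\mathfrak{r}, \mathfrak{g}_1]=0$ and $[\mathfrak{Z},\mathfrak{g}_1]\subset\mathfrak{Z}$ then follow purely from additivity of roots, while the identifications of $\mathfrak{r}+j\mathfrak{r}+\mathfrak{Z}$ with the ball factor and of $\mathfrak{r}+j\mathfrak{r}$ with the upper half-plane are read off from the dimensions of the corresponding root spaces.
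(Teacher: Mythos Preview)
The paper does not prove this theorem at all: it is stated in the preliminaries as a result of Pyateskii-Shapiro with a citation to \cite{Pya} (and implicitly \cite{Kaneyuki}), and no argument is given. So there is no ``paper's own proof'' to compare your proposal against.

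That said, your outline is a faithful sketch of how the classical proof actually runs in those references. The reduction of (1) to the existence of a split solvable simply transitive subgroup and the passage to a normal $j$-algebra is exactly Pyateskii-Shapiro's route; the explicit affine maps you write down for (2) and (3) are the standard ones (translations, Heisenberg-type shears, and lifts of $G(V)$); and the inductive peeling-off in (4) via the root pattern $\lambda_k$, $\tfrac12\lambda_k$, $\tfrac12(\lambda_k\pm\lambda_l)$ is precisely the Vinberg--Pyateskii-Shapiro structure theory of normal $j$-algebras. One small caution: in (2) the step ``homogeneous $\Rightarrow$ $V$ homogeneous'' is more delicate than you indicate---invoking ``rigidity of Siegel domains'' risks circularity, and the actual argument (due to Kaup--Matsushima--Ochiai or via the $j$-algebra description of the full automorphism group) is nontrivial. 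But since the paper treats the whole theorem as a black box, this is a matter between you and \cite{Pya, Kaneyuki}, not the present paper.
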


\begin{lemma}\label{product}
Let $D$ be a homogeneous Siegel domain of type II which is not biholomorphic to
the unit ball.
Then there is a complex manifold $M$ such that $\Delta\times M$ is holomorphically 
and properly embedded in $D$.
\end{lemma}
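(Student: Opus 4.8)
The plan is to realize $\Delta\times M$ as the orbit of a suitable normal $j$-subalgebra of the normal $j$-algebra $\frak g$ attached to $D$, and then to deduce properness from the fact that $\frak g$ acts simply transitively on $D$.

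First I would note that $\frak g_1\neq 0$. By part (4) of the structure theorem recalled above we have $\frak g=\frak r+j\frak r+\frak Z+\frak g_1$, and the accompanying remark identifies $\frak r+j\frak r+\frak Z$ with the normal $j$-algebra of a Siegel domain biholomorphic to the unit ball. Hence if $\frak g_1$ were $0$ then $D$ itself would be biholomorphic to the ball, against the hypothesis. Therefore $\frak g_1\neq 0$, and I let $M$ be the homogeneous Siegel domain (of positive dimension) associated with the normal $j$-subalgebra $\frak g_1$.

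Next I would set $\frak s:=\frak r+j\frak r+\frak g_1$. Since each of $\frak r+j\frak r$ and $\frak g_1$ is a $j$-subalgebra and $[\frak r+j\frak r,\frak g_1]=0$, the space $\frak s$ is again a normal $j$-subalgebra of $\frak g$, and as a $j$-algebra it is the direct product of $\frak r+j\frak r$ with $\frak g_1$. Writing $G=\exp\frak g$ for the (simply transitive) affine group of $D$, $o=(ir,0)$ for the base point, and $S=\exp\frak s\subset G$, I would consider the orbit $S\cdot o$. Because $\frak s$ is $j$-invariant, the real tangent space to the orbit at $o$ is a complex subspace of $T_oD$, and by $S$-homogeneity the same holds at every point; thus $S\cdot o$ is a complex submanifold of $D$ and the inclusion is holomorphic. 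Since a normal $j$-algebra acts simply transitively on its domain and a direct product of $j$-algebras corresponds to the product of the associated domains, $S\cdot o$ is biholomorphic to the Siegel domain of $\frak s$, namely the product of the upper half-plane $\mathbb{H}\cong\Delta$ (the domain of $\frak r+j\frak r$) with $M$. This yields a holomorphic embedding $\Delta\times M\hookrightarrow D$.

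The remaining, and in my view principal, point is properness, for which I would argue as follows. As $G$ acts simply transitively, the orbit map $g\mapsto g\cdot o$ is a diffeomorphism $G\to D$, under which $S\cdot o$ is exactly the image of the subgroup $S$. The group $G=\exp\frak g$ is simply connected and completely solvable (the roots of the normal $j$-algebra are real linear functionals), and in such a group every connected Lie subgroup is closed; hence $S$ is closed in $G$. Transporting this closedness through the diffeomorphism shows that $S\cdot o$ is relatively closed in $D$, and a holomorphic embedding with relatively closed image is proper. The delicate ingredient is precisely the closedness of $S$: it is here that complete solvability of $\frak g$ is indispensable, since for a merely solvable group a connected subgroup may wind densely and fail to be closed, in which case the embedding would not be proper.
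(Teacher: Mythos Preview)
Your argument is correct and follows essentially the same route as the paper: both take the decomposition $\frak g=\frak r+j\frak r+\frak Z+\frak g_1$ with $[\frak r+j\frak r,\frak g_1]=0$, form the subgroup corresponding to $\frak r+j\frak r+\frak g_1$, and identify its orbit through the base point with $\Delta\times M$. Your treatment is in fact more complete than the paper's, which merely asserts that the embedding is proper; your use of complete solvability of the normal $j$-algebra to guarantee closedness of the connected subgroup $S$ in the simply connected $G$ supplies exactly the justification the paper leaves implicit.
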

\begin{proof}
Let $\frak g$ be a normal $j$-algebra corresponding to $D$ and $G$ its Lie group.
Then there exist a $j$-subalgebra $\frak r + j\frak r +\frak Z$ and 
a normal $j$-subalgebra $\frak g_1$ 
such that $\frak g= \frak r + j\frak r +\frak Z + \frak g_1$ and $[\frak r +j\frak r, \frak g_1]=0$.
Let $R$ and $G_1$ be corresponding connected Lie subgroups 
of $\frak r$ and $\frak g_1$ respectively in $G$.
Then the $R$-orbit of the base point of $D$ is biholomorphic to the unit disc and 
$G_1$-orbit is a complex submanifold $M$ such that the product of the $R$-orbit and $M$ can be 
properly and holomorphically embedded in $D$.
\end{proof}

For a Siegel domain of type II,
$D = \{ (w,u)\in \mathbb C^n\times \mathbb C^m : \text{Im}(w) - F(u,u)\in V\}$, 
the Shilov boundary of $D$ is given by 
\begin{equation}
S = \{ (w,u)\in \mathbb C^n\times \mathbb C^m : \text{Im}(w) = F(u,u)\}.
\end{equation}
Let $D'=\{((w,u)\in \mathbb C^n\times \mathbb C^m : \text{Im}(w) - F'(u,u)\in V'\}$ be 
another Siegel domains of type II and $S'$ its Shilov boundary.
\begin{lemma}\label{Shilov boundary}
Let $f\colon D\rightarrow D'$ be a biholomorphism. Then $f$ maps the Shilov boundary of $D$
into that of $D'$. 
\end{lemma}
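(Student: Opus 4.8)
The plan is to use the intrinsic, functorial description of the Shilov boundary together with a boundary-extension property of biholomorphisms between Siegel domains of type II. Recall that the Shilov boundary of $D$ is the smallest closed subset $S\subseteq\partial D$ on which every element of the algebra $A(\overline D)=C(\overline D)\cap\mathcal O(D)$ of bounded holomorphic functions continuous up to the boundary attains its maximum modulus. Since $D$ is unbounded, I would first pass to a bounded realization of $D$ (via a Cayley transform) so that $\overline D$ is compact and the classical maximum modulus principle and the existence and minimality of the Shilov boundary apply without caveats. Under such a biholomorphic change of model, which extends to the boundary, the set $S$ is carried to the Shilov boundary of the bounded realization, so it suffices to prove the statement for these models and then transport the conclusion back.

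The first substantive step is to show that $f$ extends to a homeomorphism $\overline f\colon \overline D\to\overline{D'}$ with $\overline f(\partial D)=\partial D'$. A biholomorphism is in particular a proper holomorphic map between equidimensional Siegel domains of type II, so by Tumanov's theorem \cite{Tumanov} both $f$ and $f^{-1}$ are rational. Hence each map is holomorphic across every boundary point lying outside a thin analytic indeterminacy set, and I would combine this with the properness of $f$ (boundary sequences cluster to the boundary) to conclude that $f$ and $f^{-1}$ extend continuously to the closures and that the extensions are mutually inverse homeomorphisms carrying $\partial D$ onto $\partial D'$. I expect this extension to be the main obstacle: one must control $f$ near the lower-dimensional strata of $\partial D$, where the indeterminacy locus could a priori meet the boundary, and verify that no boundary point is blown up or collapsed.

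Granting the homeomorphic extension, the conclusion becomes formal. Composition with $\overline f$ yields an isometric isomorphism of uniform algebras $A(\overline{D'})\to A(\overline D)$, $h\mapsto h\circ\overline f$, and pulling back along $f^{-1}$ one obtains, for every $h\in A(\overline D)$,
\[
\sup_{\overline D}|h|=\sup_{\overline{D'}}\bigl|h\circ f^{-1}\bigr|=\sup_{S'}\bigl|h\circ f^{-1}\bigr|=\sup_{f^{-1}(S')}|h|.
\]
Thus $\overline{f^{-1}(S')}$ is a closed boundary for $A(\overline D)$, so by minimality of the Shilov boundary $S\subseteq\overline{f^{-1}(S')}=f^{-1}(S')$, which is precisely $f(S)\subseteq S'$. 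Finally I would transport this inclusion back to the original Siegel models through the Cayley transforms and match it against the explicit descriptions $S=\{\,\text{Im}\,w=F(u,u)\,\}$ and $S'=\{\,\text{Im}\,w=F'(u,u)\,\}$, completing the proof.
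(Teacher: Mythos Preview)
Your approach is correct and reaches the same conclusion, but it differs from the paper's argument in two respects.  First, the paper uses the \emph{peak-point} description of the Shilov boundary rather than its minimality as a closed boundary: given $p\in S$, it takes a function $\mu$ holomorphic on a neighborhood of $\overline D$ with $|\mu|$ attaining its supremum over $\overline D$ only at $p$; then $\mu\circ f^{-1}$ peaks only at $f(p)$, which forces $f(p)\in S'$.  This is a one-line contradiction argument instead of the uniform-algebra minimality computation you carry out.  Second, the paper's proof is silent on the continuous extension of $f$ to $\partial D$ (it simply writes $f(p)=q$ for $p\in S\subset\partial D$), whereas you make this step explicit via Tumanov's rationality theorem and a passage through bounded Cayley models.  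Your version is thus more careful on precisely the point you flagged as the main obstacle, buying a self-contained justification of the boundary extension at the cost of a longer argument; the paper's proof tacitly relies on the same extension.  Both routes are standard once one accepts that biholomorphisms between Siegel domains of type II extend to the closure.
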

\begin{proof}
Suppose that $f(S)\not\subset S'$.
Then there is $p\in S$ and $q\in \partial D'\setminus S'$ such that $f(p) = q$.
Let $\mu$ be a holomorphic function on some open neighborhood of $\overline D$ 
which attains $\sup_{\overline D} |\mu|$ only at $p$.
Then $G\circ f^{-1}$ attains the supremum at $q$. 
This contradicts to the definition of the Shilov boundary.
\end{proof}

For a given homogeneous Siegel domain $D(V,F)$ of type II,
the Bergman kernel has the form
\begin{equation}\label{Bergman kernel of Siegel}
\begin{aligned}
K_{D(V,F)}((w,u), (w', u'))& = c_{V,F}
\left( \frac{u - \overline{u'}}{2\sqrt{-1}}-F(w, w')\right)^{2d-q}\\
&=b\left(\frac{u - \overline{u'}}{2\sqrt{-1}}-F(w, w')\right)
\end{aligned}
\end{equation}
where $c_{V,F}$ is a constant independent of $(z,\zeta)$ and $(z',\zeta')$
and multi-indices $d=(d_1,\ldots,d_R)$, $q=(q_1,\ldots, q_R)$ 
are given by the Lie algebra structure of affine automorphism group of $D(V,F)$.
In the second line, $b$ is a holomorphic function on the tube domain
$\{u \in \mathbb C^m : \text{Im} u\in V\}$ which is a product of rational functions.
Note that every $d_j$ is semi-negative integer 
and every $q_k$ is semi-positive integer. 
Moreover $2(d_1+\cdots + d_R) - (q_1+\cdots + q_R)<-2$.
For the detail, see \cite{Gindikin}.

\subsection{The Bergman kernel of $D_{N,s}$}
For a homogeneous Siegel domain $D$,
the Bergman kernel $K_{N,s}$ of $D_{N,s}$ is given by the following \cite[Theorem 4.3]{Ishi_Park_Yamamori}:
\begin{equation}\label{Bergman kernel of Hartogs}
K_{N,s}\left( (z,\zeta), (z', \zeta')\right) 
= \frac{N!}{\pi^N}K_{D(V,F)}(z,z')^{N+1}b\left(t \frac{d}{dt}+N\right) \Big[ (1-t)^{-m-1}\Big]_{t=K_{D(V,F)}(z,z')^s\langle\zeta, \zeta'\rangle}
\end{equation}
where $b$ is a polynomial. For more detail, see \cite[Section 4]{Ishi_Park_Yamamori}.

For a bounded domain $D$ in the Euclidean space, we say that the Bergman kernel $K_D$ of $D$
{\it extends} over the boundary if for every compact subset $E$ of $D$, 
there is an open neighborhood $U$ of $D$ such that $\overline E\subset U$ and 
$K(\cdot, p)$ is holomorphic on $U$ for every $p\in E$.
By the equations \eqref{Bergman kernel of Siegel} and \eqref{Bergman kernel of Hartogs},
we can obtain the following:
\begin{lemma}
Let $D$ be a Siegel domain of type II. Then 
the Bergman kernels of $D$ and $D_{N,s}$ extend over the boundary.
\end{lemma}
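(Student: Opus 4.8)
The plan is to read off both extensions directly from the closed formulas \eqref{Bergman kernel of Siegel} and \eqref{Bergman kernel of Hartogs}, which present the two kernels as (products of) rational functions of a holomorphic argument. The whole task is then to show that, once the second variable is frozen in the interior, the resulting rational function of the free variable has its polar set disjoint from $\overline D$ (resp. $\overline{D_{N,s}}$), so that the common complement is an open neighborhood reaching across the boundary.

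For $D=D(V,F)$ I would first freeze $p=(w',u')\in D$ and write $K_D(z,p)=b(\Phi(z,p))$, where $\Phi(z,p)$ is the argument of $b$ in \eqref{Bergman kernel of Siegel}; it is holomorphic in $z=(w,u)$, and on the diagonal $\Phi(z,z)=\rho(z):=\ii w-F(u,u)$, the defining vector of $D=\{\rho\in V\}$. The key computation is the polarization identity
\[
\rr\,\Phi(z,p)\;=\;\tfrac12\rho(z)+\tfrac12\rho(p)+\tfrac12 F(u-u',\,u-u'),
\]
obtained by expanding $\Phi+\overline\Phi$ and using $\overline{F(u,u')}=F(u',u)$. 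For $z\in\overline D$ one has $\rho(z)\in\overline V$ and $F(u-u',u-u')\in\overline V$, while $\rho(p)\in V$; since $V$ is an open convex cone with $\overline V+V\subset V$, the right-hand side lies in $V$. Now $b$ is a product $\prod_j\Delta_j^{\nu_j}$ of integral powers of the Gindikin polynomials $\Delta_j$ (the $\nu_j$ negative where the poles sit), and each $\Delta_j$ is zero-free on the tube $\{\xi:\rr\,\xi\in V\}$ (if $\rr\,\xi\in V$ then $\Delta_j(\xi)\neq0$, just as $\det A\neq0$ whenever $\rr A$ is positive definite); hence $b$ is holomorphic and zero-free there, and $K_D(\cdot,p)$ extends holomorphically across $\partial D$. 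To obtain a single neighborhood valid for all $p$ in a compact $E\subset D$, I would set $U:=\{z:\rr\,\Phi(z,p)\in V\ \text{for all }p\in E\}$; joint continuity of $\Phi$ and compactness of $E$ make $U$ open, and the identity above gives $\overline D\subset U$, which is exactly the extension property required.

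For $D_{N,s}$ I would keep the neighborhood $U_0\supset\overline D$ from the previous step (taken for the compact projection of the frozen fibre) and freeze $p=(z',\zeta')\in D_{N,s}$. In \eqref{Bergman kernel of Hartogs} the factor $K_D(z,z')^{N+1}$ is holomorphic and zero-free on $U_0$ by the above; since the $\Delta_j(\Phi(z,z'))$ are zero-free on the simply connected tube, the non-integral power $K_D(z,z')^s=\prod_j\Delta_j(\Phi(z,z'))^{s\nu_j}$ is well defined and holomorphic there; and $b\!\left(t\tfrac{d}{dt}+N\right)\!\big[(1-t)^{-m-1}\big]$ is a rational function of $t$ whose only pole is at $t=1$. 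Thus the only possible singularities of $K_{N,s}(\cdot,p)$ on $U_0\times\CC^N$ lie on $\{t=1\}$, where $t=K_D(z,z')^s\langle\zeta,\zeta'\rangle$, and it remains to rule this out near $\overline{D_{N,s}}$. Applying the Bergman-kernel Cauchy–Schwarz inequality $|K_D(z,z')|\le\sqrt{K_D(z,z)K_D(z',z')}$ together with $|\langle\zeta,\zeta'\rangle|\le\|\zeta\|\,\|\zeta'\|$, I get on $D_{N,s}$
\[
|t|\;\le\;\big(K_D(z,z)^s\|\zeta\|^2\big)^{1/2}\big(K_D(z',z')^s\|\zeta'\|^2\big)^{1/2}\;<\;\sqrt{1-\epsilon_0},
\]
where $\epsilon_0>0$ arises from $p$ ranging in the compact $E$ and the defining inequality $K_D(z,z)^s\|\zeta\|^2<1$ of $D_{N,s}$. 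As $t(\cdot,p)$ is holomorphic, hence continuous, on a neighborhood of $\overline{D_{N,s}}$, the strict interior bound passes to $|t|\le\sqrt{1-\epsilon_0}<1$ on $\overline{D_{N,s}}$, placing $\overline{D_{N,s}}$ inside the open set $\{|t|<1\}$ on which $K_{N,s}(\cdot,p)$ is holomorphic; intersecting over $p\in E$ as before yields the desired neighborhood.

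The routine parts are the two algebraic identities and the continuity-plus-compactness bookkeeping that upgrades the pointwise statements to one neighborhood $U$. The main obstacle I expect is the positivity input: verifying that the polar loci of the Gindikin factors $\Delta_j$ avoid the real tube $\{\rr\,\xi\in V\}$ (so that both $b$ and the chosen branch of $K_D^s$ behave) and, on the Hartogs side, isolating $(1-t)^{-m-1}$ as the only new singular factor and controlling it through the kernel Cauchy–Schwarz inequality. The unboundedness of $\overline D$ is what forces the use of continuity of the frozen kernel, rather than a naive uniform tubular neighborhood, in the last step.
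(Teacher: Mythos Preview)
Your proposal is correct and follows exactly the route the paper indicates: deduce the extension directly from the explicit rational formulas \eqref{Bergman kernel of Siegel} and \eqref{Bergman kernel of Hartogs}. The paper in fact offers no argument beyond citing those two displays, so the polarization identity placing $\rr\,\Phi(z,p)$ in $V$, the nonvanishing of the Gindikin factors on the tube over $V$, and the Cauchy--Schwarz control of $|t|$ are all details you have supplied where the paper is silent.
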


\section{Hartogs domains over general domains}
\subsection{Automorphisms of $D_{N,s}$}\label{automorphism group for general domains}
Let $D$ be a domain in $\mathbb C^r$ such that $K_D(z,z)\neq 0$ for any $z\in D$.
For an automorphism $\phi\in \text{Aut}(D)$, let $\Phi$ be a holomorphic mapping defined by
$\Phi=(\Phi_1, \Phi_2) \colon D_{N,s}\rightarrow D_{N,s}$ with
\begin{equation}
\Phi_1(z,\zeta) = \phi(z), \quad \Phi_2(z,\zeta) = U(\zeta)\,\left( J\phi(z)\right)^s.
\end{equation}
Here $J\phi$ denotes the Jacobian determinant of $\phi$ and $U$ is a unitary transformation.
Since \begin{equation} ||\Phi_2(z,\zeta)||^2 = ||\zeta||^2 |J\phi(z)|^{2s}
\end{equation} and 
\begin{equation}
K_D(\Phi_1(z,\zeta), \Phi_1(z,\zeta))^{-s} = \left( K_D(z,z) |J\phi(z)|^{-2}\right)^{-s},
\end{equation}
it follows that $\Phi$ is well-defined and the holomorphic map $\Phi$ is an automorphism of $D_{N,s}$.

\begin{remark}
Let $\phi \colon D\rightarrow D'$ be a biholomorphism between domains $D$ and $D'$.
Then $D_{N,s}$ and $D'_{N,s}$ are biholomorphic by the map
\begin{equation}\label{biholo map}
(z,\zeta)\mapsto \left( \phi(z), J\phi(z)^s \zeta\right)
\end{equation}
where $J\phi$ denotes the Jacobian determinant of $f$.
By the transformation formula of the Bergman kernel by biholomorphisms, it is easy
to show that \eqref{biholo map} is a biholomorphism.
\end{remark}

Let $G$ be the set of automorphisms induced by all $\phi \in \text{Aut}(D)$.

\begin{lemma}
$G$ is a subgroup of $\text{Aut}(D_{N,s})$.
\end{lemma}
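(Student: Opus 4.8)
The plan is to verify the subgroup criterion directly. Since the excerpt has already shown that every map $\Phi = (\Phi_1,\Phi_2)$ of the prescribed form lies in $\text{Aut}(D_{N,s})$, we know $G \subseteq \text{Aut}(D_{N,s})$, and it remains only to check that $G$ is nonempty, closed under composition, and closed under taking inverses. Membership of the identity is immediate: taking $\phi = \text{id}_D$ and $U = \text{id}$ gives $J\phi \equiv 1$, so the induced $\Phi$ is the identity map of $D_{N,s}$.

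For closure under composition, I would take $\Phi,\Psi \in G$ induced by the pairs $(\phi,U)$ and $(\psi,V)$ and compute $\Phi\circ\Psi$ componentwise. The first component is $(\phi\circ\psi)(z)$ with $\phi\circ\psi \in \text{Aut}(D)$. For the fiber component I would use linearity of $U$ to pull the scalar $(J\psi(z))^s$ outside, obtaining $(U\circ V)(\zeta)\,(J\phi(\psi(z)))^s (J\psi(z))^s$, and then invoke the chain rule $J(\phi\circ\psi)(z) = J\phi(\psi(z))\cdot J\psi(z)$. Here $U\circ V$ is unitary as a composition of unitaries, so the remaining work is to recognize the product of Jacobian powers as a power of $J(\phi\circ\psi)$.

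The one genuine subtlety, and the step I expect to require the most care, is that $s$ is a positive real number rather than an integer, so the identity $(ab)^s = a^s b^s$ for the nonvanishing Jacobian determinants need not hold on the nose but only up to a branch factor. I would resolve this by observing that the ratio
\[
\frac{(J\phi(\psi(z)))^s\,(J\psi(z))^s}{(J(\phi\circ\psi)(z))^s}
\]
is a holomorphic function on the connected domain $D$ whose modulus is identically $1$ (the moduli of the Jacobian powers match), and hence is a unimodular constant $c$ by the maximum principle. Absorbing $c$ into the unitary factor shows that $\Phi\circ\Psi$ is induced by the pair $(\phi\circ\psi,\,c\,(U\circ V))$ with $c\,(U\circ V)$ unitary, so $\Phi\circ\Psi \in G$.

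For closure under inverses I would invert $\Phi$ explicitly: solving $w=\phi(z)$ and $\eta = U(\zeta)(J\phi(z))^s$ yields $z=\phi^{-1}(w)$ and $\zeta = U^{-1}(\eta)\,(J\phi(\phi^{-1}(w)))^{-s}$. Using $J(\phi^{-1})(w) = 1/J\phi(\phi^{-1}(w))$ together with the same unimodular-constant normalization of the non-integer power, this exhibits $\Phi^{-1}$ as the element of $G$ induced by $(\phi^{-1},\,c'U^{-1})$ for some unimodular constant $c'$, with $c'U^{-1}$ unitary. Combining the three verifications establishes that $G$ is a subgroup of $\text{Aut}(D_{N,s})$.
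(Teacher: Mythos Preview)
Your proposal is correct and follows the same direct-computation approach as the paper, which simply writes out $\Psi\circ\Phi$ and applies the chain rule $J(\psi\circ\phi)=J\psi(\phi(\cdot))\,J\phi$ to recognize the composite as the map induced by $(\psi\circ\phi,\,VU)$. You are in fact more careful than the published argument: the paper checks only closure under composition and tacitly writes $(J\phi)^s(J\psi\circ\phi)^s=(J(\psi\circ\phi))^s$, whereas you also verify the identity and inverses and correctly handle the branch ambiguity for non-integer $s$ by absorbing the resulting unimodular constant into the unitary factor.
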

\begin{proof}
Let $\phi,\, \psi\in \text{Aut}(D)$ and $\Phi,\, \Psi\in \text{Aut}(D_{N,s})$ induced by $\phi,\, \psi$ respectively.
Then 
\begin{equation}
\begin{aligned}
\Psi\circ \Phi (z,\zeta) &= \Psi\left( \Phi_1(z,\zeta), \Phi_2(z,\zeta)\right)
=\Psi\left( \phi(z), U(\zeta) (J\phi(z))^s\right)\\
&=\left( \psi\circ \phi(z), V( U(\zeta) (J\phi(z))^s) (J\psi)^s\circ(\phi(z))\right)\\
&= \left( \psi\circ \phi(z), VU(\zeta) (J(\psi\circ\phi)(z))^2\right)
\end{aligned}
\end{equation}
and hence the lemma is proved.
\end{proof}

\subsection{Strongly pseudoconvex boundary points}
Decompose the boundary of $D_{N,s}$ by 
\begin{equation}
\partial D_{N,s} = \partial_0 D_{N,s} \bigcup \left( \partial D\times \{0\}\right)
\end{equation} 
with \begin{equation}
\partial_0 D_{N,s} := \{ (z,\zeta)\in D\times \mathbb C^N : ||\zeta||^2 = K_D(z,z)^{-s}\}.
\end{equation}

\begin{lemma}\label{strongly pseudoconvex}
Suppose that the Bergman metric of $D$ is well-defined. Then 
any point in $\partial_0D_{N,s}$ is strongly pseudoconvex.
\end{lemma}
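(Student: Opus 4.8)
The plan is to produce a smooth global defining function for $D_{N,s}$ and to verify strong pseudoconvexity by computing its Levi form directly on the complex tangent space of $\partial_0 D_{N,s}$.

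First I would take the defining function
\[
\rho(z,\zeta) = \|\zeta\|^2 - K_D(z,z)^{-s},
\]
so that $D_{N,s} = \{\rho<0\}$ and $\partial_0 D_{N,s}$ is the part of $\{\rho = 0\}$ lying over $D$. Abbreviating $K = K_D(z,z)$, $K_i = \partial K/\partial z_i$ and $K_{\bar j} = \partial K/\partial \bar z_j$, the first derivatives are $\rho_{z_i} = sK^{-s-1}K_i$ and $\rho_{\zeta_k} = \bar\zeta_k$. Hence the complex tangent space at a point of $\partial_0 D_{N,s}$ consists of the vectors $(a,b)\in\CC^r\times\CC^N$ satisfying
\[
sK^{-s-1}\sum_i K_i a_i + \sum_k \bar\zeta_k b_k = 0.
\]

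Next I would compute the complex Hessian of $\rho$. The pure $\zeta$-block is the identity $I_N$ and the mixed $z$--$\zeta$ blocks vanish, so the only term requiring work is $\partial^2\rho/\partial z_i\partial\bar z_j$. Differentiating $-K^{-s}$ twice and inserting the identity $K_{i\bar j} = K g_{i\bar j} + K_iK_{\bar j}/K$, where $g_{i\bar j} = \partial^2\log K/\partial z_i\partial\bar z_j$ denotes the Bergman metric, gives
\[
\frac{\partial^2\rho}{\partial z_i\partial\bar z_j} = sK^{-s}g_{i\bar j} - s^2K^{-s-2}K_iK_{\bar j}.
\]
Writing $|a|_g^2 = \sum_{i,j} g_{i\bar j}a_i\bar a_j$ and $\kappa = \sum_i K_i a_i$, and recalling that $K_{\bar j} = \overline{K_j}$ since $K$ is real, the Levi form of a tangent vector $(a,b)$ becomes
\[
L(a,b) = sK^{-s}\,|a|_g^2 - s^2 K^{-s-2}\,|\kappa|^2 + \|b\|^2 .
\]

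The crux is to dominate the single indefinite term $-s^2K^{-s-2}|\kappa|^2$, and this is exactly where the boundary relation $\|\zeta\|^2 = K^{-s}$ and the tangency constraint cooperate. From the constraint, $\sum_k\bar\zeta_k b_k = -sK^{-s-1}\kappa$, so Cauchy--Schwarz yields $s^2K^{-2s-2}|\kappa|^2 = |\sum_k\bar\zeta_k b_k|^2 \le \|\zeta\|^2\|b\|^2 = K^{-s}\|b\|^2$, that is, $\|b\|^2\ge s^2K^{-s-2}|\kappa|^2$. Consequently
\[
L(a,b) \ge sK^{-s}\,|a|_g^2 \ge 0,
\]
and if $L(a,b)=0$ then $|a|_g^2 = 0$, which forces $a=0$ by positive-definiteness of the Bergman metric (the hypothesis of the lemma), hence $\kappa=0$ and then $\|b\|=0$. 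Thus $L$ is positive definite on the complex tangent space, so every point of $\partial_0 D_{N,s}$ is strongly pseudoconvex. The hypothesis enters only in this last positivity step; the main, albeit routine, obstacle is the Hessian bookkeeping that re-expresses the second derivatives of $K^{-s}$ through the Bergman metric, after which the Cauchy--Schwarz estimate closes the argument at once.
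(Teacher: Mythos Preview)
Your argument is correct and follows essentially the same route as the paper: same defining function $\rho=\|\zeta\|^2-K^{-s}$, same rewriting of $-\partial\bar\partial K^{-s}$ through the Bergman metric, and the same use of the tangency relation together with Cauchy--Schwarz to control the indefinite term. The only cosmetic difference is that the paper substitutes the tangency condition exactly to obtain the closed-form expression $sK^{-s}\partial\bar\partial\log K(X,\bar X)+\|\zeta\|^{-2}\bigl(\|Y\|^2\|\zeta\|^2-|\langle\zeta,Y\rangle|^2\bigr)$, whereas you phrase the last step as an inequality and then analyze the equality case.
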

\begin{proof}
For the notational convention, we will write $K_D(z,z)$ by $K$.
Denote $\rho(z) =  ||\zeta||^2 - K_D(z,z)^{-s}$.
Note that 
\begin{equation}
\begin{aligned}
\partial\overline\partial \rho(z) &= 
\left(
\begin{array}{cc}
-\partial\overline\partial K^{-s} & 0 \\
0 & I_{N\times N}
\end{array}
\right),\\
 -\partial\overline\partial K^{-s} &= sK^{-s-1} \partial\overline\partial K
-s(s+1) K^{-s-2} \partial K\wedge \overline\partial K,  \\
\partial\overline\partial \log K &= \frac{\partial \overline\partial K}{K} - \frac{\partial K\wedge \overline\partial K}{K^2}.
\end{aligned}
\end{equation}
This implies that 
\begin{equation}
 -\partial\overline\partial K^{-s} = sK^{-s} \partial\overline\partial \log K
-s^2 K^{-s-2} \partial K\wedge \overline\partial K,  
\end{equation}
Let $V= X+Y$ with $X=\sum_{j=1}^r X_j\frac{\partial}{\partial z_j}$ and 
$Y=\sum^N_{j=1} Y_j\frac{\partial}{\partial \zeta_j}$ be a complex tangent vector
on the boundary, i.e., $\partial \rho (V)=\sum_{j=1}^N \overline\zeta_j Y_j + sK^{-s-1}\partial K(X)=0$.
Hence we obtain
\begin{equation}
|\partial K(X)|^2 = s^{-2}K^{2s+2}\Big|\sum^N_{j=1}\overline\zeta_j Y_j\Big|^2.
\end{equation}
Since $\partial \overline\partial \rho(V,\overline V) = ||Y||^2  - \partial\overline\partial K^{-s}(X,\overline X)$,
\begin{equation}
\begin{aligned}
\partial \overline\partial \rho(V, \overline V) 
&= ||Y||^2 + sK^{-s} \partial\overline\partial \log K(X, \overline X)
-s^2 K^{-s-2} \left( s^{-2}K^{2s+2}\Big|\sum^N_{j=1}\overline\zeta_j Y_j\Big|^2\right)\\
&=  sK^{-s} \partial\overline\partial \log K(X, \overline X) +||Y||^2 
- K^s\Big|\sum^N_{j=1}\overline\zeta_j Y_j\Big|^2\\
&=sK^{-s} \partial\overline\partial \log K(X, \overline X) + ||\zeta||^{-1}\left( ||Y||^2||\zeta||^2 
- \Big|\sum^N_{j=1}\overline\zeta_j Y_j\Big|^2\right) >0
\end{aligned}
\end{equation}
where the third equality is due to $||\zeta||^2 = K^{-s}$.
\end{proof}
%\begin{remark}
%Let $D$ be a bounded domain in $\mathbb C^r$ and $p\colon D\rightarrow (0,\infty)$ 
%a continuous function. Define Hua type domains by 
%\begin{equation}
%D_{N, p} := \{(z,\zeta)\in D\times \mathbb C^N : ||\zeta||^2 < p(z)\}.
%\end{equation}
%\end{remark}
\begin{remark} 
Lemma \ref{strongly pseudoconvex} holds Hartogs domain over a domain $D$ 
of the form \eqref{Hartogs domain} with $s>0$.
\end{remark}
\section{Proof of theorems}\label{The proof}
The first step that we need to consider is that the maps in the theorems send 
$D\times \{0\}$ to $D'\times \{0\}$.
When the map is an automorphism of the bounded symmetric domains of classical type,
it is prove in \cite{Ahn_Byun_Park}
and when the map is a proper holomorphic map between bounded symmetric domains of 
classical type, it is proved in \cite{Tu_Wang}.
In our case, we use the following lemma:

\begin{lemma}\cite[Proposition 2.2.]{Mok_Tsai}\label{radial extension}
Suppose that $f$ is a bounded holomorphic function defined on $\Delta\times W$ 
where $W$ is a bounded domain in $\mathbb C^k$.
Then there exists a holomorphic function $f_\theta$ on $W$ satisfying 
$\lim_{r\rightarrow 1}f(re^{i\theta}, w) = f_\theta(w)$ for almost every $\theta\in \partial \Delta$. 
Moreover, if $f_\theta$ is constant for a measurable subset of $\partial \Delta$,
then $f$ is independent of variables of $W$.
\end{lemma}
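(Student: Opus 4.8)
The plan is to treat the two assertions separately: first the almost-everywhere existence of a holomorphic radial limit $f_\theta$, and then the rigidity statement. First I would fix a bound $|f|\le M$ on $\Delta\times W$ and observe that, by the Cauchy estimates in the $w$-variable, the family $\{f(\lambda,\cdot):\lambda\in\Delta\}$ is uniformly bounded and equicontinuous on every compact subset of $W$, hence normal. Choosing a countable dense subset $\{w_n\}\subset W$ and applying Fatou's theorem to each bounded holomorphic function $\lambda\mapsto f(\lambda,w_n)$ on $\Delta$, I obtain a single null set $E\subset\partial\Delta$ off of which every radial limit $\lim_{r\to1}f(re^{i\theta},w_n)$ exists.

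For $\theta\notin E$ the functions $f(re^{i\theta},\cdot)$ converge as $r\to1$ on the dense set $\{w_n\}$ and form a normal family, so by Vitali's theorem they converge locally uniformly on $W$ to a holomorphic function $f_\theta$. Since the radial limit along $\{w_n\}$ is a genuine limit in $r$ (not merely along subsequences), the local uniform limit is the same for every sequence $r_k\to1$, giving $\lim_{r\to1}f(re^{i\theta},w)=f_\theta(w)$ for every $w\in W$ and almost every $\theta$. This settles the existence part.

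For the rigidity, suppose $f_\theta$ is constant in $w$ for $\theta$ in a measurable set $A\subset\partial\Delta$ of positive measure. Fix an index $j$ and set $h_j=\partial f/\partial w_j$, which is holomorphic on $\Delta\times W$ and, by the Cauchy estimates, bounded on $\Delta\times E'$ for every $E'\subset\subset W$; in particular $\lambda\mapsto h_j(\lambda,w)$ lies in $H^\infty(\Delta)$ for each fixed $w$. Because the convergence $f(re^{i\theta},\cdot)\to f_\theta$ is locally uniform, differentiation commutes with the limit, so for $\theta\notin E$ the radial limit of $h_j(\cdot,w)$ equals $\partial f_\theta/\partial w_j(w)$; for $\theta\in A$ this vanishes. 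Thus, for each fixed $w$, the bounded holomorphic function $\lambda\mapsto h_j(\lambda,w)$ has boundary values equal to $0$ on the positive-measure set $A$.

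The crux is the classical boundary uniqueness theorem for bounded holomorphic functions on $\Delta$ (the Privalov, or F.\ and M.\ Riesz, uniqueness theorem): a function in $H^\infty(\Delta)$ whose non-tangential boundary values vanish on a set of positive measure must be identically zero. Applying it to $\lambda\mapsto h_j(\lambda,w)$ yields $h_j(\cdot,w)\equiv0$ for every $w$ and every $j$, so $\partial f/\partial w_j\equiv0$ throughout $\Delta\times W$ and $f$ is independent of the $W$-variables. The hard part will be exactly this last transition: one must verify that the $w$-derivatives of $f$ genuinely inherit the vanishing boundary values from $f_\theta$ — which relies on the local uniformity of the radial convergence together with the Cauchy estimates — before the one-variable uniqueness theorem can be invoked.
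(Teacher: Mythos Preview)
The paper does not supply its own proof of this lemma; it is simply quoted from Mok--Tsai \cite{Mok_Tsai} as a black box, so there is nothing in the paper to compare your argument against.

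That said, your reconstruction is correct and is essentially the standard argument. The existence part---Fatou on a countable dense subset of $W$, normality via Montel, and then Vitali to upgrade pointwise convergence on the dense set to locally uniform convergence on all of $W$---is exactly right, and your remark that the limit is independent of the approximating sequence $r_k\to 1$ (because it is pinned down on the dense set) closes the only possible gap there. For the rigidity part, the key step you flag is indeed the one that needs care: passing from locally uniform convergence of $f(re^{i\theta},\cdot)\to f_\theta$ to convergence of the $w$-derivatives is justified by the Cauchy estimates, and once $\partial f/\partial w_j(\cdot,w)$ is known to be in $H^\infty(\Delta)$ with radial boundary values vanishing on a set of positive measure, the F.\ and M.\ Riesz (or Privalov) uniqueness theorem finishes the job. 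One minor point worth making explicit: the set $A$ on which $f_\theta$ is constant must be taken of \emph{positive} measure (the lemma as stated in the paper omits this, but it is clearly intended and you correctly insert it), and one should intersect $A$ with the full-measure set on which the radial limits exist before invoking uniqueness.
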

\begin{lemma}\label{center to center}
Let $D$ and $D'$ be homogeneous Siegel domains of type II and 
$f\colon D_{N,s}\rightarrow D'_{N', s'}$ a proper holomorphic map.
Suppose that $D$ and $D'$ are not biholomorphic to the unit ball.
Then $f$ maps $D\times \{0\}$ to $D'\times\{0\}$.
\end{lemma}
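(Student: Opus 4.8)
The plan is to restrict $f$ to the zero section $D\times\{0\}$, transport the problem onto a product $\Delta\times M$ supplied by Lemma~\ref{product}, and run a radial--limit argument based on Lemma~\ref{radial extension}. First I would record that $D\times\{0\}$ is an \emph{interior} complex submanifold of $D_{N,s}$ (since $\|0\|^2=0<K_D(z,z)^{-s}$), so that $f_2|_{D\times\{0\}}$ is a genuine holomorphic map $D\to\CC^{N'}$; the assertion is exactly that it vanishes identically. Composing $f$ with the biholomorphism of the Remark following \eqref{biholo map} (which preserves zero sections), I may assume $D'$ is realized as a \emph{bounded} domain; then $K_{D'}(w',w')\ge \text{vol}(D')^{-1}>0$, whence $\|f_2\|^2<K_{D'}(f_1,f_1)^{-s'}\le \text{vol}(D')^{s'}$, so $f_2$ is bounded. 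Since $D$ is not biholomorphic to the ball, Lemma~\ref{product} furnishes a proper holomorphic embedding $\iota\colon\Delta\times M\hookrightarrow D$. Setting $G(w,m):=f_2(\iota(w,m),0)$, I obtain a bounded holomorphic map on $\Delta\times W$ for every bounded $W\Subset M$.

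Next I would apply Lemma~\ref{radial extension} to each component of $G$: for almost every $\theta$ the radial limit $G_\theta(m)=\lim_{r\to1}f_2(\iota(re^{i\theta},m),0)$ exists. Because $\iota$ is proper, $\iota(re^{i\theta},m)\to\partial D$ as $r\to1$, so the source points $(\iota(re^{i\theta},m),0)$ tend to the \emph{bottom} $\partial D\times\{0\}$ of $\partial D_{N,s}$, and properness of $f$ forces their images into $\partial D'_{N',s'}=\partial_0 D'_{N',s'}\cup(\partial D'\times\{0\})$. This yields a dichotomy for the cluster point: either it lies on the bottom $\partial D'\times\{0\}$, in which case $G_\theta(m)=0$, or it lies on $\partial_0 D'_{N',s'}$, in which case $f_1$ tends to an interior point of $D'$ and $\|f_2\|^2\to K_{D'}(f_1,f_1)^{-s'}>0$.

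The crux --- and the step I expect to be the main obstacle --- is to exclude the second alternative on a set of $\theta$ of positive measure, i.e. to show that $f$ cannot carry the zero section over the disc to the strongly pseudoconvex part of the target boundary. Here I would exploit the contrast between the two pieces of the boundary: by Lemma~\ref{strongly pseudoconvex} every point of $\partial_0 D'_{N',s'}$ is strongly pseudoconvex, whereas the bottom $\partial D\times\{0\}$ is Levi degenerate, since the Hartogs fibre pinches to a point there. A degenerate source boundary point should not be mappable onto a strongly pseudoconvex target boundary point by a map admitting local holomorphic extension; invoking a local peak function at the putative image point together with the local boundary extension provided by Tumanov's method (Lemma~\ref{Tumanov}), one is led to a contradiction. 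Consequently, for each $m$, one gets $G_\theta(m)=0$ for $\theta$ in a set of positive measure.

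Finally I would conclude as follows. For each fixed $m$, the bounded holomorphic function $w\mapsto G(w,m)$ on $\Delta$ has vanishing radial limits on a set of positive measure, hence vanishes identically by the classical boundary uniqueness theorem; thus $f_2\equiv0$ on $\iota(\Delta\times M)$. To upgrade this to all of $D$ I would use homogeneity: for $g\in\text{Aut}(D)$ let $\Phi_g$ be the induced automorphism of $D_{N,s}$, which fixes the zero section, so that $(f\circ\Phi_g)_2(z,0)=f_2(g(z),0)$. Applying the argument above to the proper map $f\circ\Phi_g$ gives $f_2\equiv0$ on $g(\iota(\Delta\times M))$, and since $\text{Aut}(D)$ acts transitively these translates cover $D$. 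Therefore $f_2(\cdot,0)\equiv0$, that is $f(D\times\{0\})\subset D'\times\{0\}$.
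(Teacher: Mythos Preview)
Your setup---restricting to the zero section, realizing $D'$ as bounded so that $f_2$ is bounded, and pulling back along $\iota\colon\Delta\times M\hookrightarrow D$---matches the paper's, and the radial-limit lemma is indeed the right tool. The genuine gap is at your ``crux'' step. You attempt to rule out the alternative that the radial limit lands on $\partial_0 D'_{N',s'}$ by a Levi-form comparison (degenerate source versus strongly pseudoconvex target), leaning on Lemma~\ref{Tumanov} for a local holomorphic extension of $f$ near $\partial D\times\{0\}$. This does not go through: hypothesis~(2) of Lemma~\ref{Tumanov} requires a uniform Jacobian lower bound along the chosen sequence, which you have not verified and have no reason to expect on the zero section as $z\to\partial D$; moreover the boundary $\partial D_{N,s}$ is not a smooth hypersurface along $\partial D\times\{0\}$ (the fibre collapses there), so the ``degenerate point cannot map to a strongly pseudoconvex point'' principle you invoke has no clean formulation here. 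Without step~9 your argument stalls.

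The paper sidesteps this entirely by exploiting the $M$-direction instead of comparing source and target boundary geometry. One argues by contradiction: pick $z_0\in D$ with $f(z_0,0)\notin D'\times\{0\}$ and arrange (by homogeneity of $D$) that $\iota(0,y_0)=z_0$. For almost every $\theta$ the radial-limit map $m\mapsto (f\circ\iota)(e^{i\theta},m)$ is holomorphic from $M$ into $\partial D'_{N',s'}$, and the choice of $z_0$ forces it to meet $\partial_0 D'_{N',s'}$ at some $y$. Since that piece is strongly pseudoconvex (Lemma~\ref{strongly pseudoconvex}), any holomorphic map from a positive-dimensional manifold into $\partial D'_{N',s'}$ that touches it must be constant. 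Thus the radial limit is constant in $m$ for a positive-measure set of $\theta$, and the \emph{second} clause of Lemma~\ref{radial extension} now yields that $f\circ\iota$ is independent of the $M$-variable---contradicting properness, since $\dim M>0$. This replaces your unjustified step~9 with the elementary fact that a strongly pseudoconvex hypersurface contains no germ of a positive-dimensional complex variety, and it makes your steps~10--11 (boundary uniqueness plus sweeping by $\text{Aut}(D)$) unnecessary.
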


\begin{proof}
Suppose that $f(D\times \{0\})\not\subset D' \times \{0\}$.
Choose $z_0\in D$ such that $f(z_0,0)\in D'_{N',s'}\setminus D'\times \{0\}$.
Let $\imath \colon \Delta \times M \hookrightarrow D$ be a proper holomorphic embedding 
such that $\imath(0,y_0)=z_0$ for some $y_0\in M$, i.e.,
\begin{equation}\label{eq1}
f\circ\imath(0,y_0) \in D'_{N',s'}\setminus D'\times \{0\}.
\end{equation}
Choose $\theta \in [0,2\pi)$ such that 
$f\circ \imath(e^{i\theta}, \cdot) \colon M\rightarrow \mathbb C^{N'+r'}$ 
is a well- defined holomorphic map and 
$f\circ \imath(e^{i\theta}, M)\subset \partial D'_{N',s'}$.
Because of \eqref{eq1}, there exists $y \in M$ such that 
$f\circ \imath (e^{i\theta}, y)\in \partial_0 D'_{N',s'}$.
Since $\partial_0 D'_{N',s'}$ is strongly pseudoconvex, 
$f\circ\imath(e^{i\nu}, \cdot)$ is a constant map for each $\nu$ in some neighborhood
of $\theta$. 
Hence $f\circ \imath$ is independent of $M$ and it is a contradiction
to the fact that $f$ is proper.
Therefore $f(D\times\{0\})\subset D'\times \{0\}$.
\end{proof}

%\begin{proof}
%Fix $p=(z,0)\in D\times\{0\} \subset D_{N,s}$.
%Since $D$ is a homogeneous domain, for each boundary point $q\in \partial D\times \{0\}$,
%there is a sequence of automorphism $\{\Phi_j\}_{j=1}^\infty$ 
%of $D_{N,s}$ of the form \eqref{induced auto} such that 
%$\Phi_j(p)\rightarrow q$.
%Suppose that $f\circ \Phi_j(p)\rightarrow \partial_0 D'_{N',s'}$ as $j\rightarrow \infty$.
%Then $f\circ \Phi_j \circ f^{-1} \in \text{Aut}(D'_{N',s'})$ and 
%$f\circ \Phi_j \circ f^{-1}(f(p))\rightarrow \partial_0D'_{N',s'}$.
%Hence by a Wong's theorem in \cite{Wong} and Lemma \ref{strongly pseudoconvex},
%$D'_{N',s'}$ is biholomorphic to the unit ball.
%This contradiction implies that $f_2|_{D\times \{0\}}$ tends to $(0,\ldots,0)\in \mathbb C^{N'}%$ 
%when the point tends to $\partial D\times \{0\}$. 
%By the maximum principle, $f_2|_{D\times\{0\}}$ is identically $(0,\ldots,0)\in \mathbb C^{N'}%$
%and this implies the lemma.
%\end{proof}

\begin{proposition}\label{rigidity of holo}
Let $U$ be an open neighborhood of $0\in \mathbb C^{r+N}$ and
$f\colon U\rightarrow \mathbb C^{r+N}$ a holomorphic map 
such that  
\begin{enumerate}
\item
$f( D_{N,s}\cap U)\subset D_{N,s'}$ and 
$f(\partial D_{N,s}\cap U)\subset \partial D_{N,s'}$ with $s\leq s'$, \label{condition 1}
\item
$f(U\cap (D\times \{0\}))\subset D\times \{0\}$ and 
$f|_{U\cap (D\times \{0\})}\equiv id$, 
\item $d_\zeta f|_{(0,0)}\neq 0$.\label{condition 3}
\end{enumerate}
Then $s=s'$ and the Jacobian matrix of $f$ at $0$ is $\left( \begin{array}{cc}
Id_r& b\\
0& c\\
\end{array}
\right)
$ for some unitary $N$ by $N$ matrix $c$ and $r$ by $N$ matrix $b$.

\end{proposition}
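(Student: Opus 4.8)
The plan is to separate the conclusion into three pieces of increasing difficulty: the block shape of the Jacobian, then the equality $s=s'$ and the unitarity of $c$, the latter two being forced by confronting the boundary condition~(1) with the geometry of $\partial_0 D_{N,s}$. First I would write $f=(g,h)$ with $g$ valued in $\mathbb C^r$ and $h$ in $\mathbb C^N$. Condition~(2) says $f$ restricts to the identity on the open piece $U\cap(D\times\{0\})$ of the hyperplane $\{\zeta=0\}$, so by the identity theorem $g(z,0)\equiv z$ and $h(z,0)\equiv 0$ there. Differentiating in $z$ at the origin gives $\partial_z g(0)=Id_r$ and $\partial_z h(0)=0$, so the Jacobian already has the asserted shape, with $b:=\partial_\zeta g(0)$ and $c:=\partial_\zeta h(0)$; only $s=s'$ and the unitarity of $c$ remain.

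Next I would exploit the boundary condition. Writing $\rho_t(z,\zeta)=\|\zeta\|^2-K_D(z,z)^{-t}$, condition~(1) says $\rho_{s'}\circ f$ and $\rho_s$ vanish on the same smooth hypersurface (for $z\in D$, where the fibre sphere $\|\zeta\|^2=K_D(z,z)^{-s}$ is genuinely smooth), with $f$ carrying interior to interior; real-analytic division then gives $\rho_{s'}\circ f=\psi\,\rho_s$ for a nonnegative real-analytic $\psi$ on the $D$-side. Evaluating at $\zeta=0$, where $g=z$ and $h=0$, collapses this to the clean identity $\psi(z,0)=K_D(z,z)^{\,s-s'}$. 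To see the mechanism behind $s=s'$, I would first treat the model jet $g=z$, $h=C(z)\zeta$: the boundary relation $\|C(z)\zeta\|^2=K_D(z,z)^{-s'}$ on $\|\zeta\|^2=K_D(z,z)^{-s}$ holds for every $\zeta$ on the sphere, forcing $C(z)^*C(z)=K_D(z,z)^{\,s-s'}Id_N$, hence $|\det C(z)|^2=K_D(z,z)^{\,N(s-s')}$. Since $\log|\det C|^2$ is pluriharmonic while $\partial\overline\partial\log K_D$ is the Bergman metric of $D$, which for a homogeneous Siegel domain is positive definite, applying $\partial\overline\partial\log$ forces $s=s'$; then $C(z)^*C(z)=Id_N$ and $c=C(0)$ is unitary.

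For a general jet I would pass to leading order by scaling: on $\partial_0 D_{N,s}$ set $\zeta=K_D(z,z)^{-s/2}\omega$ with $\|\omega\|=1$, so the boundary relation $\|h\|^2=K_D(g,g)^{-s'}$ becomes $\|C(z)\omega\|^2=K_D(g,g)^{-s'}K_D(z,z)^{s}+o(1)$. Letting $z\to 0$ through $D$ and using that the kernel ratio $K_D(g,g)/K_D(z,z)\to 1$ yields $\|c\,\omega\|^2=\lim_{z\to0}K_D(z,z)^{\,s-s'}=:\kappa$ for every unit $\omega$, whence $c^*c=\kappa\,Id_N$. Finiteness of $c$ excludes $\kappa=+\infty$ (ruling out $s>s'$); $\kappa=1$ gives $c$ unitary with $s=s'$; and $\kappa=0$ is the degenerate alternative $s<s'$ with $c=0$.

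The hard part will be twofold. First, controlling the kernel along the approach: using the explicit product form \eqref{Bergman kernel of Siegel} together with the inequality $2(d_1+\cdots+d_R)-(q_1+\cdots+q_R)<-2$, I would approach along a radial ray $z=(itr,0)$ with $r\in V$ and $t\downarrow 0$, where $K_D(z,z)\to\infty$ and the perturbation $g-z=O(K_D(z,z)^{-s/2})$ becomes negligible against the boundary-defining factors, so the ratio indeed tends to $1$. Second, and genuinely more delicate, is excluding the degenerate branch $s<s'$ (which forces $c=0$): here I would feed condition~(3) back in, aiming to show that with $c=0$ the first-order analysis of the $g$-component forces $b=0$ as well, contradicting $d_\zeta f(0)\ne 0$. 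Disentangling, in the $(1,1)$-matching of $\rho_{s'}\circ f=\psi\,\rho_s$, the separate contributions of $b$, of the higher $\zeta$-jets of $f$, and of the unknown boundary behaviour of $\psi$ is the step I expect to be the main obstacle.
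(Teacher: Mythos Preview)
Your opening moves---the block shape of the Jacobian from condition~(2), the real-analytic division $\rho_{s'}\!\circ f=\psi\,\rho_s$, and the evaluation $\psi(z,0)=K_D(z,z)^{s-s'}$---are exactly what the paper does. After that the routes diverge.

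The paper does not pass to a scaling limit. Instead it \emph{polarizes}: using the explicit rational form~\eqref{Bergman kernel of Siegel} of the Siegel kernel, the real-analytic identity $|f_2|^2-K_D(f_1,f_1)^{-s'}=\mu\bigl(|\zeta|^2-K_D(z,z)^{-s}\bigr)$ is complexified to a holomorphic equation in $((z,\zeta),(w,\xi))$, and by the identity principle it then holds on all of $U$, including the Shilov boundary point $0$. One simply Taylor-expands both sides at $(0,0)$ and reads off the pure $|\zeta|^2$-coefficient: on the left it is $|c\zeta|^2$ (the $K_D(f_1,f_1)^{-s'}$ term contributes only at order $>2$ because the total kernel exponent satisfies $-(2d-q)s'>2$), on the right it is $K_D(z,z)^{s-s'}|\zeta|^2$, which at $z=0$ equals $0$ unless $s=s'$. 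This bypasses your limiting argument entirely---no need to control $K_D(g,g)/K_D(z,z)$ along an approach ray---and the same expansion delivers $a=0$ from the $|z|^2$-term. Your scaling route for hard part~(a) can likely be pushed through with the ingredients you name, but polarization is the cleaner and more direct device here.

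Your hard part~(b) is a genuine gap, and you are right to isolate it: if $s<s'$ your limit forces $c=0$, and nothing in the outline then forces $b=0$. The proposed step ``feed condition~(3) back into the $(1,1)$-matching'' is not developed, and I do not see how to make it go---all the $K_D^{-s'}$-contributions from the $b\zeta$-perturbation vanish to order $>2$ at $0$, so no constraint on $b$ emerges at the relevant level. In fact the paper's own proof reads condition~(3) as asserting $c\neq 0$ rather than merely $(b,c)\neq 0$; this is what actually holds in the application to Theorem~\ref{s=s'}, where $f$ is a biholomorphism and the Jacobian $\left(\begin{smallmatrix} I_r & b\\ 0 & c\end{smallmatrix}\right)$ has $\det c\neq 0$. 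The honest fix, for both your approach and the paper's, is to take $c\neq 0$ as the hypothesis; with that in hand, $s<s'$ is excluded immediately and $c$ is unitary.
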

\begin{proof}
Denote $f = (f_1, f_2)$ where $f_1\colon D_{N,s}\rightarrow D$,
$f_2\colon D_{N,s}\rightarrow \mathbb C^{N}$ are projections
and $\rho_1(z,\zeta)= |\zeta|^2 - K_D(z,z)^{-s}$.
Fix a point $(z_0,\zeta_0)\in \partial D_{N,s}\cap U$ such that $\zeta_0\neq 0$. 
Note that $d\rho_1|_{(z_0,\zeta_0)}\neq 0$.
Since
$|f_2(z,\zeta)|^2 - K_D(f_1(z,\zeta), f_1(z,\zeta))^{-s'}=0$ whenever 
$z\in U$ and $|\zeta|^2 - K_D(z,z)^{-s}=0$, 
there exists a real analytic function $\mu(z,\zeta)$ on a sufficiently small
neighborhood of $(z_0,\zeta_0)$ such that 
\begin{equation}\label{equation}
|f_2(z,\zeta)|^2 - K_D(f_1(z,\zeta), f_1(z,\zeta))^{-s'}
 = \mu((z,\zeta),(\overline z,\overline \zeta))\left(|\zeta|^2 - K_D(z,z)^{-s}\right).
\end{equation}
By polarization, we obtain the holomorphic equation, 
\begin{equation}\label{polarization}
f_2(z,\zeta)\cdot\overline f_2(w,\xi) - K_D(f_1(z,\zeta), f_1(\overline w,\overline \xi))^{-s'}
 = \mu((z,\zeta),(w,\xi))\left(\zeta\cdot \xi - K_D(z,\overline w)^{-s}\right).
\end{equation}
By the identity property of holomorphic functions, it follows that 
\begin{equation}
f_2(z,\zeta)\cdot\overline f_2(w,\xi) - 
K_D(f_1(z,\zeta), f_1(\overline w,\overline \xi))^{-s'}=0
\end{equation}
whenever
$\zeta\cdot \xi - K_D(z,\overline w)^{-s}=0$ and $ ((z,\zeta),(w,\xi))\in U\times U$.
This implies that \eqref{equation} holds for all $(z,\zeta)\in U$ by substituting 
$(w,\xi)=(\overline z,\overline \zeta)$

Substitute $\zeta=0$ on \eqref{equation}. Then $$\mu(z,0) = K(z,z)^{s-s'}.$$
If we expand $\mu$ in terms of $\zeta$ variable, we can express $\mu$ as
\begin{equation}
\mu(z,\zeta) = K(z,z)^{s-s'} + \sum_{j=1}^N \zeta_j g_j(z,\overline z) 
+ \sum^N_{j=1} \overline \zeta_j \overline{g_j(z,\overline z)} + O(\zeta^2)
\end{equation}
with some functions $g_j$ where $O(\zeta^2)$ denotes the higher order terms
with respect to $\zeta$.
Note that $K(0,0)^{s-s'}=0$ if $s\neq s'$.
Let
\begin{equation}
\begin{aligned}
f_1(z,\zeta) &= z + b\zeta + O^2\\
f_2(z,\zeta) &= az + c\zeta + O^2
\end{aligned}
\end{equation}
with linear transformations $a, b, c$ 
where $O^2$ denotes the higher order terms in $\zeta$ and $z$.
Then the equation \eqref{equation} can be expressed by
\begin{equation}\label{equation2}
\begin{aligned}
&| az + c\zeta + O^2|^2 - c\left( \text{Im}(\pi_1(z + b\zeta + O^2))- F(\pi_2(z + b\zeta + O^2), \pi_2(z + b\zeta + O^2))\right)^{-(2d-q)s'}\\
 &= (K(z,z)^{s-s'} + \sum_{j=1}^N \zeta_j g_j(z,\overline z) 
+ \sum^N_{j=1} \overline \zeta_j \overline{g_j(z,\overline z)} + O(\zeta^2))
 \left(|\zeta|^2 - K(z,z)^{-s}\right)
\end{aligned}
\end{equation}
where $\pi_1\colon D\rightarrow \mathbb C^n$ and $\pi_2\colon D\rightarrow \mathbb C^m$
are projections.
In the first line of \eqref{equation2}, the second order term in $\zeta$ variable is 
$|c\zeta|^2$ which does not vanish by the condition \eqref{condition 3}.
On the other hand in the second line of \eqref{equation2}, 
the second order term in $\zeta$ variable exists only when $K(z,z)^{s-s'}|\zeta|^2 = |\zeta|^2$.
That is $s=s'$. Moreover $||c\zeta|| = ||\zeta||$ and hence we obtain that $c$ is a unitary transformation.
Furthermore, if we consider the $|z|^2$ term, it follows that $a=0$.
\end{proof}
\begin{remark}
In the proof of Theorem \ref{s=s'}, we used the condition $s\geq 1$ in the definition of 
the Hartogs domains.
\end{remark}
\begin{lemma}\cite[Lemma 2.1]{Tumanov}\label{Tumanov}
Let $f \colon D\rightarrow D'$ be a proper holomorphic map of bounded domains in $\mathbb C^n$
each of whose kernel functions $K_D$ and $K_{D'}$ extends to the boundary.
Assume that there is a sequence of points $z_k\in D$, $k=1,2,\ldots$, satisfying the following conditions:
\begin{enumerate}
\item $z\rightarrow a\in \partial D$ and $f(z_k)\rightarrow b\in \partial D'$ as $k\rightarrow \infty$.
\item $|Jf(z_k)| > \epsilon$ for some $\epsilon >0$.
\item There exists a point $p\in D'$ such that $K'(b,p)\neq 0$ and $R'(b,p)\neq 0$,
where 
$$
R'(z,\zeta) = \det \left( \frac{\partial^2}{\partial z_i\overline \partial\zeta_j}\log K_{D'}(z,\zeta)\right).
$$
\end{enumerate}
Then $f$ extends holomorphically to a neighborhood of $a\in \partial D$.
\end{lemma}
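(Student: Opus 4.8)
The plan is to recover $f$ near $a$ from the boundary values of the two Bergman kernels, exploiting the fact that $R'(z,\zeta)=\det\bigl(\partial^2\log K_{D'}/\partial z_i\,\overline\partial\zeta_j\bigr)$ measures precisely the obstruction to solving for $f$. First I would record the transformation law for the Bergman kernel under the proper map $f$. If $m=\deg f$ and $p\in D'$ is a regular value with $f^{-1}(p)=\{q_1,\dots,q_m\}\subset D$, then for $z\in D$
\[
 Jf(z)\,K_{D'}(f(z),p)=\sum_{l=1}^m \frac{1}{\overline{Jf(q_l)}}\,K_D(z,q_l).
\]
This is proved by pairing both sides against an arbitrary $h$ in the Bergman space $L^2_h(D)$ and using the $m$-sheeted change of variables $w=f(z)$ together with the reproducing property of $K_{D'}$, the relevant ``trace'' $\sum_l h(q_l)/Jf(q_l)$ being holomorphic and square integrable on $D'$. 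Writing $g_p(z):=Jf(z)\,K_{D'}(f(z),p)$, the points $q_l$ stay in a fixed compact subset of $D$, and since $K_D$ extends over the boundary each $K_D(\cdot,q_l)$ is holomorphic on a neighborhood of $\overline D$; hence $g_p$ extends holomorphically across $\partial D$, in particular to a neighborhood of $a$. As regular values are dense and the conditions $K_{D'}(b,p)\neq0$, $R'(b,p)\neq0$ are open in $p$, I may assume the given $p$ is a regular value.

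Next I would eliminate the unknown Jacobian factor. Condition (2), $|Jf(z_k)|>\epsilon$, together with $K_{D'}(b,p)\neq0$, forces $|g_p(z_k)|\ge\tfrac12\epsilon|K_{D'}(b,p)|$ for large $k$; since $g_p$ is holomorphic near $a$ and $z_k\to a$, its boundary value satisfies $g_p(a)\neq0$, so $g_p$ is zero-free near $a$. The same holds for $g_\eta$ with $\eta$ close to $p$, and therefore
\[
 \Xi(z,\eta):=\log g_\eta(z)-\log g_p(z)=\log K_{D'}(f(z),\eta)-\log K_{D'}(f(z),p)
\]
is holomorphic in $z$ near $a$, with the $\log Jf$ contribution cancelled. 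Because $\Xi$ is holomorphic in $z$ for each fixed $\eta$, differentiation in $\overline\eta$ commutes with $\overline\partial_z$, so $B_j(z):=\frac{\partial}{\partial\overline\eta_j}\Xi(z,\eta)\big|_{\eta=p}=\beta_j(f(z),p)$ is again holomorphic in $z$ near $a$, where $\beta_j(w,p):=\frac{\partial}{\partial\overline p_j}\log K_{D'}(w,p)$.

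Finally I would invert. The map $\mathcal B\colon w\mapsto(\beta_1(w,p),\dots,\beta_n(w,p))$ has complex Jacobian $\bigl(\partial\beta_j/\partial w_i\bigr)=\bigl(\partial^2\log K_{D'}/\partial w_i\,\partial\overline p_j\bigr)$, whose determinant at $w=b$ is exactly $R'(b,p)\neq0$; since $K_{D'}(b,p)\neq0$, the map $\mathcal B$ is holomorphic near $b$ and, by the holomorphic inverse function theorem, a biholomorphism onto a neighborhood, with inverse $\Psi$. On $D$ near $a$ one has $B=\mathcal B\circ f$, hence $f=\Psi\circ B$ there; as the right-hand side is holomorphic on a full neighborhood of $a$, this produces the desired holomorphic extension of $f$ across $\partial D$.

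The main obstacle is the first step: rigorously establishing the proper-map transformation formula and the resulting boundary extension of $g_p$, which requires selecting a regular value $p$ that simultaneously lies off the branch locus and satisfies the hypotheses in (3), and checking that the trace function is holomorphic and $L^2$ on $D'$. The remaining two steps reduce to the holomorphic inverse function theorem once the functions $g_p$, $\Xi$, $\beta_j$ have been identified; there the only delicate point is the bookkeeping needed to conclude, via the identity theorem, that $\Psi\circ B$ genuinely agrees with $f$ on an open subset of $D$ near $a$, rather than merely along the sequence $z_k$.
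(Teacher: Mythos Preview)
The paper does not give a proof of this lemma at all; it is simply quoted as \cite[Lemma~2.1]{Tumanov} and used as a black box. Your sketch is essentially Tumanov's own argument: Bell's transformation formula for the Bergman kernel under a proper map shows that $g_\eta(z)=Jf(z)\,K_{D'}(f(z),\eta)$ extends holomorphically past $a$; dividing (equivalently, subtracting logarithms) kills the unknown factor $Jf$; differentiating in $\overline\eta$ produces $\beta_j(f(z),p)=\partial_{\overline p_j}\log K_{D'}(f(z),p)$; and the hypothesis $R'(b,p)\neq 0$ is exactly the statement that $w\mapsto(\beta_1(w,p),\dots,\beta_n(w,p))$ is a local biholomorphism at $b$, so one recovers $f$ by composing with its inverse.

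The two caveats you flag are the right ones and are handled in Tumanov's paper in the expected way. First, Bell's formula holds for \emph{every} $\eta\in D'$ (the right-hand side, written with the local inverses, is antiholomorphic in $\eta$ and the apparent singularities over the critical values are removable), so one need not worry about moving $p$ off the branch locus; in any case, regular values are dense and the conditions $K_{D'}(b,\cdot)\neq 0$, $R'(b,\cdot)\neq 0$ are open. Second, the identity $f=\Psi\circ B$ is first seen on a neighbourhood of some $z_k$ in $D$ (there $f(z)$ is close to $b$, hence in the domain where $\Psi$ inverts $\mathcal B$), and then propagates by the identity theorem to the component of $D\cap U_a$ containing the tail of the sequence; this is the sense in which the extension is asserted. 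For the domains actually used in the paper (Siegel domains and the Hartogs domains $D_{N,s}$), $D\cap U_a$ is connected for small $U_a$, so no ambiguity arises.
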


\begin{proof}[{\bf Proof of Theorem \ref{s=s'}}]
%Fix $z\in D$ and consider the restriction of $f$ on 
%$B_z:= \{(z,\zeta)\in D_{N,s}\colon (z, \zeta)\in D_{N,s}\}$ 
%which is biholomorphic to the unit ball in $\mathbb C^N$.
%By Lemma \ref{radial extension}, 
%there exists a radial limit for almost every $\partial B_z$.
%If almost all radial limit tend to $\partial D\times \{0\}\subset \overline D_{N,s'}$, 
%then again by Lemma \ref{radial extension},
%we obtain $f(B_z)\subset D\times\{0\}$ and hence it is a contradiction.
%This implies that there should be a radial limit lies on $\partial_0 D_{N, s'}$.
%Let $(z,p)$ be the point in $\partial_0 D_{N,s}$ such that 
%$\lim_{r\rightarrow 1} f(z,rp)$ exists and lies on $\partial_0 D_{N, s'}$.
%By \cite{Berteloot}, 
%there exists an open neighborhood
%$U_{(z,p)}$ of $(z,p)$ such that $f$ extends over the boundary.
Suppose that $D$ is the unit ball. Then $D_{N,s}$ is an ellipsoid.
Suppose that there is a biholomorphic map $f \colon D_{N,s}\rightarrow D'_{N',s'}$.
If $D'$ is not biholomorphic to the unit ball, then there is an embedding 
from $\Delta\times M$ for some complex submanifold $M$ into $D'$ and hence 
there should be a complex submanifold of dimension greater than zero in $\partial D_{N,s}$ 
by the same reason of Lemma \ref{center to center}.
This contradiction implies that $D'$ is also the unit ball. Hence $D'_{N',s'}$ is an ellipsoid.
In this case the theorem is proved by Naruki in \cite{Naruki}.

Now suppose that $D$ and $D'$ are not the unit balls.
By Lemma \ref{center to center} it follows that 
$f|_{D\times \{0\} }\colon D\times \{0\} \rightarrow D'\times \{0\}$
 is a biholomorphism and by Theorem \ref{Tumanov} it is rational.
Hence $N=N'$ and there exists a sequence $z_k\in D\times \{0\}$ 
such that $z_k\rightarrow  S\times \{0\}$
and $Jf(z_k) >\epsilon$ for some $\epsilon>0$.
Say $a:= \lim_{k\rightarrow \infty} z_k$.
Here $S$ denotes the Shilov boundary of $D$.
By Lemma \ref{Tumanov} $f$ extends 
holomorphically to a neighborhood of $a$.

By Lemma \ref{Shilov boundary} it follows that $b :=\lim_{k\rightarrow \infty} f(z_k)$ is also 
contained in the Shilov boundary of $D'$.
Therefore there exist
$\phi\in \text{Aut}(D)$ and $\psi\in \text{Aut}(D')$ such that 
$\phi(a)=0$ and $\psi(b) = 0$.
Denote $\Phi\in \text{Aut}(D_{N,s})$ and $\Psi\in \text{Aut}(D'_{N',s'})$ induced by $\phi$ and $\psi$ according to \eqref{induced auto} respectively. 
Then $\Psi\circ f\circ \Phi^{-1}$ is a biholomorphism from $D_{N,s}$ to $D'_{N,s'}$ which extends
holomorphically to a neighborhood of $(0,0)$.

Let $\sigma := (\Psi\circ f\circ \Phi^{-1})^{-1}|_{D'\times\{0\}}$. 
Note that it is a biholomorphism from $D'$ to $D$.
Let $\Sigma$ be a biholomorphism from $D'_{N,s'}$ onto $D_{N,s'}$ defined by \eqref{biholo map}.
Then $\Sigma\circ \Psi\circ f\circ \Phi^{-1} \colon D_{N,s}\rightarrow D_{N,s'}$ is a 
biholomorphism satisfying the conditions in Proposition \ref{s=s'} and hence $s=s'$
and hence $\Sigma\circ \Psi\circ f\circ \Phi^{-1}$ is an automorphism of $D_{N,s}$.

Note that $\Sigma\circ \Psi\circ f\circ \Phi^{-1}(z,0)=(z,0)$ for any $z\in D$.
By Proposition \ref{rigidity of holo}, it follows that 
\begin{equation}
d(\Sigma\circ \Psi\circ f\circ \Phi^{-1})|_{(0,0)} = \left(
  \begin{array}{cc}
  I_r& b\\
  0 & c
  \end{array}\right)
 \end{equation}
 where $c$ is a $m\times m$ unitary matrix.
 Define $C(z,\zeta) = (z, c^{-1}\zeta)$ and 
$F := C\circ \Sigma\circ \Psi\circ f\circ \Phi^{-1}$. 
 Then \begin{equation}
 dF|_{(0, 0)} = \left(
 \begin{array}{cc}
 I_r& b\\
 0& I_N
 \end{array}\right)
 \end{equation} 
On the other hand $k$-th composition of $ F$, say $F^k$, 
is also an automorphism
and there is an automorphism $\tilde F$ of $D_{N,s}$ such that $ F^k$ converges uniformly in compact-open topology 
since $ F|_{D\times \{0\}}$ is the identity map.
By Lemma \ref{Tumanov} $\tilde F$ extends holomorphically over the boundary
and hence $d\tilde F|_{(0,0)}$ should be finite.
This implies that $b=0$.

Expanding $ F$ in a power series at $(0,0)$ yields
$$ F(\xi) = \xi + P_k(\xi) + O(|\xi|^{k+1}),$$
where $P_k$ is the first nonvanishing homogeneous polynomial of degree k
of order exceeding $1$ in the Taylor expansion.
Then direct computation gives that 
\begin{equation}
\begin{aligned}
 F^2(\xi) &= \xi + 2 P_k(\xi) + O(|\xi|^{k+1})\\
&\,\,\,\vdots\\
 F^j(\xi) &= \xi + j P_k(\xi) + O(|\xi|^{k+1})
\end{aligned}
\end{equation}
where $\xi = (z,\zeta)$.
But this gives a contradiction to that $ \tilde F$ extends to a neighborhood 
of $(0,0)$.
This implies that $F = id$ on $D_{N,s}$ and hence 
the theorem is proved.
\end{proof}

\begin{proof}[Proof of Theorem \ref{proper map}]
Because of Lemma \ref{product}, it follows that $f$ maps $D\times \{0\}$ into $D'\times \{0\}$.
Let $H = \{z \in D_{N,s} : Jf(z)=0\}$. 
Suppose that there is a sequence $z_k$ in $H$ such that $z_k\rightarrow \partial _0D_{N,s}$
and $f(z_k)\rightarrow \partial_0 D'_{N',s'}$ as $k\rightarrow \infty$.
Since $\partial_0 D_{N,s}$ and $\partial_0 D'_{N',s'}$ are strongly pseudoconvex by Lemma 
\ref{strongly pseudoconvex}, we may take scaling sequences 
$s_k \colon D_{N,s}\rightarrow \mathbb C^d$
and $S_k \colon D'_{N',s'} \rightarrow \mathbb C^d$ where $d=\dim D_{N,s}=\dim D'_{N',s'}$
such that $s_k$ and $S_k$ are biholomorphisms onto their images and 
$s_k(D_{N,s})$, $S_k(D'_{N',s'})$ converge to the unit ball in $\mathbb C^d$
(For more detail, see \cite[9.2 Higher Dimensional Scaling and the Wong-Rosay Theorem]{Greene_Kim_Krantz}).
Then $S_k\circ f\circ s_k^{-1}$ converges a proper holomorphic map between the unit ball
in $\mathbb C^d$, say $F$. Since every proper holomorphic self-map of the unit ball 
is an automorphism (\cite{Alexander}), $F$ is an automorphism of the unit ball
and hence extends to the closure of the unit ball as a diffeomorphism.
This contradicts to that $z_k\in H$ and hence $f(H)\subset D'\times \{0\}$ by the maximum principle.
Since the codimension of $f(H)$ is $1$, there is an open subset in $D$ such that $Jf$ vanishes.
It is a contradiction to the fact that $f$ is a proper holomorphic map. Hence $H=\emptyset$,
i.e., $f$ is unbranched.
Since $D_{N,s}$ is simply connected, $f$ is a biholomorphism.

\end{proof}

\begin{remark}
When $D$ and $D'$ are the unit balls, that is, when $D_{N,s}$ and $D'_{N',s'}$ are 
complex ellipsoids, there is a proper holomorphic map which is not a biholomorphism.
\end{remark}

\end{document}